\numberwithin{equation}{section}
\def\N{\mathbb{N}}
\def\R{\mathbb{R}}\def\C{\mathbb{C}}
\newcommand{\rdbrack}{]\!]}
\newcommand{\ldbrack}{[\![}
\newcommand{\bpsi}{\boldsymbol{\psi}}
\newcommand{\bphi}{\boldsymbol{\phi}}
\renewcommand\d{\partial}
\def\l{\lambda}
\def\epsilon{\varepsilon}
\def\e{\varepsilon}
\newcommand\br{\begin{rem}}
\newcommand\er{\end{rem}}
\newcommand\bp{\begin{pmatrix}}
\newcommand\ep{\end{pmatrix}}
\newcommand\be{\begin{equation}}
\newcommand\ee{\end{equation}}
\newcommand\ba{\begin{equation}\begin{aligned}}
\newcommand\ea{\end{aligned}\end{equation}}
\newtheorem{theorem}{Theorem}[section]
\newtheorem{proposition}[theorem]{Proposition}
\newtheorem{example}[theorem]{Example}
\newtheorem{remark}[theorem]{Remark}
\title{Slow dynamics in reaction-diffusion systems} 
\begin{document}

\maketitle

\begin{center}
MARTA STRANI\footnote{Universit\'e Paris-Diderot, Institut de Math\'ematiques de Jussieu-Paris Rive Gauche, Paris (France). E-mail addresses: \texttt{marta.strani@imj-prg.fr}, \texttt{martastrani@gmail.com}.}
\end{center}

\vskip1cm

\begin{abstract}
We consider a system of reaction-diffusion equations in a bounded interval of the real line,
with emphasis on the {\it metastable dynamics}, 
whereby the time-dependent solution approaches the steady state in an asymptotically 
exponentially long time interval as the viscosity coefficient $\varepsilon>0$ goes to zero. To rigorous describe such behavior, we analyze the dynamics of layered solutions localized far from the stable configurations of the system, and we derive an ODE for the position of the internal interfaces.
\end{abstract}

\begin{quote}\footnotesize\baselineskip 14pt 
{\bf Key words:} 
Metastability, slow motion, internal layers, reaction-diffusion systems. 
 \vskip.15cm
\end{quote}

\begin{quote}\footnotesize\baselineskip 14pt 
{\bf AMS subject classifications:} 35B36, 35B40,  35K45, 35K57
 \vskip.15cm
\end{quote}

\pagestyle{myheadings}
\thispagestyle{plain}
\markboth{M.STRANI}{SLOW DYNAMICS IN REACTION-DIFFUSION SYSTEMS}

\section{Introduction}

In this paper we study the long time dynamics   of solutions for a class of parabolic systems of the form
\begin{equation}\label{systemintro}
\partial_t \boldsymbol{u} = \mathcal F^\varepsilon[\boldsymbol{u}], 
\end{equation}
 where $\mathcal F^\varepsilon$ is a nonlinear differential operator of reaction-diffusion type that depends singularly on the parameter $\varepsilon$, meaning that $\mathcal F^0[\boldsymbol{u}]$ is of lower order. The unknown $\boldsymbol{u}$ belongs to the space $[L^2(I)]^n$ for some bounded interval $I \subset \R$. System \eqref{systemintro} is complemented with initial datum $\boldsymbol{u}_0(x) \in [L^2(I)]^n$ and appropriate boundary conditions.
 \vskip0.2cm
In particular, we are interested in describing the metastable behavior manifested by a special class of solutions to \eqref{systemintro}. Roughly speaking, a metastable dynamics appears when the time dependent solutions of an evolutive PDE exhibit a first transient phase where they are close to some non-stationary state before converging, in an extremely long time, to their asymptotic limits, namely a solution to $\mathcal F^\varepsilon[\boldsymbol{u}]=0$.  In other words, a dynamics is said to be metastable if there exists a first time scale of order $\mathcal O(1)$ in time where a pattern of internal interfaces is formed; once this structure is formed, it persists for an exponentially long time interval, whose size usually depends on the parameter $\varepsilon$. Last step of the dynamics is an (exponentially slow) motion of the solutions towards their asymptotic configuration.

As a consequence, two different time scales emerge: a first transient phase of order $\mathcal O(1)$, where the solutions develop a layered structure, and a subsequent long time phase characterizes by the slow convergence of such solutions to some stable configuration.

\vskip0.2cm
Slow motion of internal interfaces has been investigated for a number of different partial differential equations. To name just some of these results, we recall here \cite{KreiKrei86}, a pioneering article in the study of slow dynamics for viscous scalar conservation laws; here the authors consider the viscous Burgers equation in a bounded interval of the real line, namely
$$\partial_t u= \varepsilon \partial^2_{x} u -\partial_x f(u), \quad x \in I,$$
complemented with Dirichlet boundary conditions, proving that the eigenvalues of the linearized operator around an equilibrium configurations are real, negative, and have the following distribution with respect to $\varepsilon$
\begin{equation*}
\lambda_1^\varepsilon=\mathcal O(e^{-1/\varepsilon})\quad \textrm{and}\quad \lambda_k^\varepsilon<-\frac{c_0}{\varepsilon}<0\qquad\forall\,k\geq 2.
\end{equation*}
The presence of a first eigenvalue small with respect to $\varepsilon$ implies that the convergence towards the equilibrium is very slow, when $\varepsilon$ is small: indeed,  the large time  behavior of solutions is described by terms of order $e^{\lambda_1^\varepsilon\,t}$.

Starting from this result, there are several papers concerning slow motion of internal shock layer for viscous conservation laws, see for instance  \cite{LafoOMal94}, \cite{LafoOMal95}, \cite{MS} and \cite{ReynWard95}. Slow dynamics of interfaces has been examined also for convection-reaction-diffusion equations (see  \cite{BerKamSiv01,Str13} and \cite{SunWard99}), for relaxation system, with the contribution \cite{Str12}, and for phase transition problem  described by the Cahn-Hilliard equation in \cite{AlikBatFus91, Pego89}. 

\vskip0.2cm
In this paper we mean to analyze the slow motion of internal interfaces for a system of reaction diffusion equations. Depending on the assumptions on the terms of the system, reaction-diffusion type systems can be used to describe different models in mathematics and physics: the principle areas of application, together with a few of the many possible references, are the following: neurophysics and biophysics \cite{Kee80, OstYan75}, chemical physics \cite{Fif76b, OrtLos75}, phase changes \cite{CagFif}, population genetics \cite{AroWei75} and mathematical ecology \cite{MimMur79}.
Most of the above applications deal with a system of equations, one of which is a reaction diffusion equation for the unknown $u$, with the source term depending on $u$ and another variable $v$, that satisfies a different equation coupled to $u$. 


\vskip0.2cm
If $n=1$, that is $u \in L^2(I)$, the one dimensional Allen-Cahn equation
 \begin{equation}\label{AC}
 \partial_t u = \varepsilon \partial^2_x u- f(u), \quad x \in I,
 \end{equation}
is the most popular example of reaction-diffusion equation. Equation \eqref{AC} is complemented with appropriate Dirichlet or Neumann boundary conditions and initial datum $u_0(x)$.  The function $f : \R \to \R$ is the derivative of a symmetric double well potential $W(u)$ with two equal non-degenerate minima. To fix the ideas, one may assume $W(u)=\frac{(u^2-1)^2}{4}$, which has two minima in $u=\pm 1$. 
 . 
 
 \vskip0.2cm
 Equation \eqref{AC} was firsty introduced by S.M. Allen and J.W. Cahn in \cite{AllCah79} to model the interface motion between different crystalline structures in alloys; in this context $u$ represents the concentration of one of the two components of the alloy and the parameter $\varepsilon$ is the interface width. A possible way to derive the Allen-Cahn equation is to compute the gradient of the Ginzburg-Landau energy functional
\begin{equation}\label{GL}
\mathcal I(u)= \int_0^1 \left(\frac{1}{2}\varepsilon |{\partial_x u}|^2 +W(u)\right) \, dx
\end{equation}
 into the space $L^2(I)$, where $W(u)$ is defined as $\partial_u W(u)=f(u)$. This point of view implies that we are assuming the field $u$ to be governed by a gradient equation on the form $\partial_t u=-\delta_u \mathcal I$ (see \cite{Fif02}), that is one assumes the solutions to the Allen-Cahn equation \eqref{AC} to evolve according to the $L^2$ gradient flow of the energy functional \eqref{GL}.
Precisely, there are two different effects: the reaction term $f(u)$ pushes the solution towards the two minima $u=\pm 1$, while the diffusion term  $\varepsilon \partial^2_x u$ tends to regularize and smoothen the solution. In the small viscosity limit, i.e. $\varepsilon \to 0$, two different phases appear, corresponding to regions where the solution is close to $\pm 1$, and the width of the transition layers  between these two phases is of order $\varepsilon$. In \cite{Evans92} it is proven that, in the limit $\varepsilon \to 0$, the solutions only attains the values $\pm 1$ and the interface evolves according to the motion by mean curvature.

  \vskip0.2cm
The steady state solutions to \eqref{AC} have been studied in several papers: for example, \cite{Fif74} for solutions with a single transition layer  in the whole line (and \cite{Fif76} for the same problem in a bounded interval with Dirichlet boundary conditions); the problem of the stability of the steady state has been considered in \cite{ClePel} in the case of finite domains with Neumann boundary conditions, but the most complete result concerning existence, uniqueness and stability for both simple and multi-layered solutions to \eqref{AC} in a one dimensional bounded domain has been proven in \cite{AngMalPel87}.
 \vskip0.2cm
For the time dependent problem, it is proven in \cite{FifHsi88} that, given an initial datum $u_0$ that changes sign  inside the interval, the solution of the evolutive equation develops into a layered function $u^\xi$, where $\xi \in \R^N$ represents the position of the interfaces, being $N \geq 1$ the number of the layers. Moreover, the study of the eigenvalue problem obtained by linearizing around $u^\xi$ is crucial in the study of the motion of the layers (\cite{Che92,Che94}): it is possible to show that there exist exactly $N$ exponentially small eigenvalues, with $N$ equals to the number of the layers, while the rest of the spectrum is bounded away from zero uniformly with respect to $\varepsilon$ (see, for example \cite{AngMalPel87}).
 \vskip0.2cm
 Following the line of a spectral analysis, the phenomenon of metastability for the Allen-Cahn equation has been studied, among others, in \cite{CarrPego89,CarrPego90} and \cite{Che04,FuscHale89}; in particular, in \cite{CarrPego89}, the authors prove that the dynamics of the solutions to \eqref{AC} is the following: starting from two-phase initial data, a pattern of interfacial layers develops in a relatively short time far from the stable equilibrium, defined as the minimizer of the energy functional $\mathcal I(u)$.
Once the solution has reached this state, it changes extremely slow, with a time scale proportional to $e^{1/\varepsilon}$.  To rigorously describe such metastable dynamics, the authors build-up a  family of functions $u^\xi(x)$ which approximates a metastable state with $N$ transition layers, being $\xi=(\xi_1, . . . . , \xi_N )$ the $N$ layer positions;  they subsequent linearize the original equation around an element of the family, obtaining a system of ordinary differential equations for the layers positions $\xi_i$. 

In \cite{Che04}, the work of Carr and Pego \cite{CarrPego89} is extended by considering generic initial data (not only the ones that are $\mathcal O(e^{-1/\varepsilon})$ close to the metastable pattern).

As already stressed, the key of the analysis is to one side the spectral analysis of the linearized operator arising from the linearization around $u^\xi(x)$: precisely, it is shown that there exist exactly $N$ small (with respect to $\varepsilon$) eigenvalues, while all the other eigenvalues are negative and bounded away from zero uniformly with respect to $\varepsilon$.
The existence of, at least, one first small eigenvalue implies that the convergence towards the equilibrium can be extremely slow, depending on the size of $\varepsilon$. On the other side, the limiting behavior of the solutions to the original problem can be well understood in terms of an invariant manifold theory (see also \cite{Fus90}).

Another important contribution in the study of the metastable dynamics for reaction-diffusion equations is the work of F. Otto and M.G. Reznikoff \cite{OttRez06}, where the authors describes the slow motion of PDEs with gradient flow structure, with a particular attention to the one-dimensional Allen-Cahn equation: the idea here is to translate informations on the energy into informations on the dynamics of the solutions; in particular, the authors state and prove a Theorem that gives sufficient condition to be imposed on the energy so that the associated gradient flow exhibits a metastable behavior.

In the line of the energy methods it is worth to mention also the previous fundamental contribution of L. Bronsard and R.V. Kohn \cite{BroKoh90}.

\vskip0.2cm
A striking result in the study of the metastability for the Allen-Chan equation in dimension greater or equal to one is the reference \cite{AliFus96}: concerning spectral properties, it is expected an infinity of critical eigenvalues associated with the interface (see \cite{AliFus94} for a precise result in this direction). The study of the eigenvalue problem provides, as usual, informations on the stability of the interfaces, as shown in \cite{DemSch90}.

The existence, the asymptotic stability of the steady states and the study of the time dependent problem for different type of reaction-diffusion systems in different space dimensions have been extensively studied. To name some of these results, see, for example, \cite{MimTabHos80} for a two component
reaction-diffusion system in one dimensional space (see also  \cite{NisFuj86} for the internal layer problem), or \cite{AlaBroGui96} for a couple of  scalar reaction diffusion equations  with $x \in \R^2$.
\vskip0.2cm
Results relative to metastability for systems of reaction-diffusion equations appear to be rare (we recall here, \cite{HubeSerr96, KreiKreiLore08}), the main difficulty stemming from the fact that a spectral analysis in the case of a system of equations needs much more care. A recent contribution is the reference \cite{BetOrlSme11}, where the authors study the slow dynamics of solutions of a system of reaction-diffusion equations by means of the study of the evolution of the localized energy. Other papers concerning the slow motion of solutions to systems with gradient structure are \cite{BetSme13}, \cite{Kaletall01} and \cite{Ris08}, whose analysis is based entirely on energy methods.

\vskip0.2cm
In this context, the aim of this paper is to  rigorous describe the slow motion of a pattern of internal layers for a system of reaction-diffusion equations; precisely, we are interested in the case of a single internal layer (being the case of a larger number of interfaces a straightforward adaptation), and we want to describe the dynamics after this is formed.

\vskip0.2cm
The main difference with respect to previous works concerning metastability for  reaction-diffusion systems is that here we develop a general theory than can be applicable to a wider class of equations, not only the ones of gradient type. Precisely, our analysis lies on the study of the linearized system obtained from the linearization of the original equations around a steady state; by using an adapted version of the projection method and by exploiting the spectral properties of the linearized operator, we are able to derive an explicit equation for the layer position describing its slow motion. 

This {\it dynamical} approach, based on a deep use of the structure of the equation, yields to a very exact result for the speed rate of convergence of the solution, in contrast to the energy approach that gives only an upper bound for the speed (see, for example, \cite{Gra95}); we thus achieve a rigorous result via a different method with respect to the previous papers that have consider metastablity for gradient flow systems, where the key of the analysis is the study of the energy associated to the system.

In particular, the Allen-Cahn equation and reaction-diffusion systems with a gradient structure fit into this general framework.

\vskip0.2cm
Before describing in details the strategy we mean to use, let us briefly explain the dynamical behavior of a generic solution to  \eqref{AC}.

Heuristically, in the case of a single equation of reaction-diffusion type, the solution with a single transition layer  evolves as follow: since the layer interacts with its reflections with respect to the boundary points of the interval $I=(a,b)$, if the layer is closer to $a$, then it is attracted by its reflection with respect to $a$ and it moves towards $a$. Once the layer have reached a neighborhood of $x=a$ it suddenly disappears, and, for large times, the solution converges to one of the stable patternless solutions $u= u_\pm$, where $u_\pm$ are the minima of the energy $\mathcal I(u)$.

As a consequence, starting from an initial datum with only one simple zero inside the interval $I$, in the first stage of the dynamics an internal layer is formed in an $\mathcal O(1)$ time scale; the subsequent motion of this layer is exponentially slow until finally, for $\varepsilon$ sufficiently small,  it quickly disappears and the solution converges to either $u_+$ or $u_-$.  

The fact that the dynamics yields to a reduction of the number of the layers up to their totally disappearance, agrees with the fact that, if $u$ is the solution of the equation, then the number $n(t)$ of the zeroes of $u$ is non increasing in time (see \cite{Ang88} as a reference). 

\vskip0.2cm
In particular, such slow motion can be described in terms of the evolution of the location of interface, named here $\xi(t)$; precisely, we expect $\xi(t)$ to slowly evolve towards one of the wall of the interval, up to the total  disappearance of the layers, corresponding to the convergence of the solution ${u}$ to one of the equilibrium configuration of the system. We will refer with $\bar \xi$ to this final {\it equilibrium} configuration for the variable $\xi$.

\vskip0.2cm
Going deeper in details, in order to describe this behavior for a system of reaction-diffusion equations,  the strategy we use is the following:

\vskip0.1cm
{\bf 1.} We build-up a one parameter family of functions $\{ \boldsymbol{U}^\varepsilon(x;\xi)\}$, whose elements are close to a steady state for \eqref{systemintro} in a sense that will be specified later, and where the parameter $\xi$ usually describe the location of the interface.
\vskip0.1cm
{\bf 2.} In order to study the dynamics of solutions up to the formation of the internal interface, we linearize the original system around 
the family $\{ \boldsymbol{U}^\varepsilon\}$, by writing the solution $\boldsymbol{u}$ as the sum of an element of the family and a small perturbation $\boldsymbol{v}$.
\vskip0.1cm
{\bf 3.} Under suitable hypotheses, we state and prove two results (see Theorem \ref{teo1} and Proposition \ref{cor:metaL2}) concerning the study of the coupled system for the perturbation $\boldsymbol{v}$ and the parameter $\xi$, and describing the metastable behavior of the system under consideration.

\vskip0.2cm
In \cite{MS}, the authors firstly utilize such a technique to describe the slow motion of the solutions to a general class of parabolic systems: the key of their analysis is the linearization around a one-parameter family of approximate steady states (parametrized by $\xi(t)$, describing the motion along the family), and the subsequent analysis of the system obtained for the couple $(\xi,\boldsymbol{v})$, where $\boldsymbol{v}$ is the perturbative term. Dealing with this system brings into the analysis of the specific form of the quadratic terms arising from the linearization that, for a certain class of parabolic system (as, for example, system of viscous conservation laws) involve a dependence on the space derivative of the solution. This is the reason why the authors consider only an approximation of the complete nonlinear equations for the couple $(\xi,\boldsymbol{v})$, obtained by disregarding the quadratic terms in $\boldsymbol{v}$, so that the dependence  on the space derivative is canceled out.

\vskip0.2cm
The aim of the present paper is to show that, in the specific case of parabolic systems of reaction-diffusion type, it is possible to perform a study of the complete system for the couple $(\xi,\boldsymbol{v})$.
More precisely, when linearizing a reaction-diffusion type system around an approximate steady state (step {\bf 2}) the nonlinear terms arising from the linearization do not involve space derivatives, so that it is possible to obtain an $L^2$ estimate for the perturbation $\boldsymbol{v}$, without disregarding the nonlinear terms.

In particular, by keeping in the system for the variables $(\xi,\boldsymbol{v})$ the nonlinear dependence on the variable $\xi$, and by estimating also the higher order terms arized from the linearization,  
our analysis gives a complete description of the dynamics up to the formation of the interface, and far from its equilibrium configuration.
As a consequence, the two phases of the dynamics (formation of the interface and subsequent slow motion of the layered solution) are rigorously separated and  described.

\vskip0.2cm
We close this Introduction describing in details the contribution of the paper. In Section 2  we build up a one-parameter family of approximate steady states  $\{ \boldsymbol{U}^\varepsilon(x;\xi) \}$ and we linearize the original system around an element of the family. By using an adapted version of the projection method, we obtain a coupled system for the perturbation $\boldsymbol{v}$, defined as the difference $\boldsymbol{v}:= \boldsymbol{u}- \boldsymbol{U}^\varepsilon$, and for the parameter $\xi$, describing the slow motion of the interface; in particular, the equation for $\xi$ is obtained in such a way the first component of the perturbation (the one that has a very slow decay) is canceled out. Finally, in Section 3, we analyze the complete system for the couple $(\xi, \boldsymbol{v})$, and we state and prove the following Theorem, that gives a precise estimate for the $L^2$ norm of the perturbation.
\begin{theorem}\label{teointro}
For every $\boldsymbol{v}_0 \in [L^2(I)]^n$ 
and for every $t \leq T^\varepsilon$, there holds 
\begin{equation*}
|\boldsymbol{v}-\boldsymbol{z}|_{{}_{L^2}}(t) \leq C \left( |\Omega^\varepsilon|_{{}_{L^\infty}} +C \,  |\boldsymbol{v}_0|^2_{{}_{L^2}}\right),
\end{equation*}
where $\boldsymbol{z}$ is such that $|\boldsymbol{z}|_{{}_{L^2}} \leq C \, e ^{-c t}$,
while $|\Omega^\varepsilon|_{{}_{L^\infty}}$ is converging to zero as $\varepsilon \to 0$. In addition, the final time $T^\e$ is of order $|\Omega^\e|_{{}_{L^\infty}}^{-1}$, hence diverging to $+\infty$ as $\varepsilon\to 0$. 
\end{theorem}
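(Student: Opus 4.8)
The plan is to begin from the coupled system for the pair $(\xi,\boldsymbol{v})$ obtained in Section 2 through the adapted projection method, which I expect to have the schematic form
\begin{equation*}
\partial_t \boldsymbol{v} = \mathcal L^\varepsilon \boldsymbol{v} + \dot\xi\,\partial_\xi \boldsymbol{U}^\varepsilon + \Omega^\varepsilon + \mathcal Q(\boldsymbol{v}),
\end{equation*}
where $\mathcal L^\varepsilon$ is the operator arising from linearizing $\mathcal F^\varepsilon$ around the approximate steady state $\boldsymbol{U}^\varepsilon(\cdot\,;\xi)$, the term $\Omega^\varepsilon$ collects the residual $\mathcal F^\varepsilon[\boldsymbol{U}^\varepsilon]$ together with the piece of the slow eigendirection eliminated by the projection, and $\mathcal Q(\boldsymbol{v})$ gathers the nonlinear terms. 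The decisive structural feature, and the improvement over \cite{MS}, is that for a reaction-diffusion system $\mathcal Q$ is quadratic in $\boldsymbol{v}$ and involves no space derivatives, so it obeys $|\mathcal Q(\boldsymbol{v})|_{{}_{L^2}} \leq C|\boldsymbol{v}|^2_{{}_{L^2}}$. The projection is built so that $\boldsymbol{v}$ lies in the spectral subspace complementary to the single slow mode, where the spectral analysis recalled in the Introduction yields a gap: the restriction of $\mathcal L^\varepsilon$ to this subspace generates a semigroup with $\|e^{t\mathcal L^\varepsilon}\|_{{}_{L^2\to L^2}} \leq C e^{-ct}$ for some $c>0$.

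Next I would set $\boldsymbol{z}(t):= e^{t\mathcal L^\varepsilon}\boldsymbol{v}_0$, which gives $|\boldsymbol{z}|_{{}_{L^2}}(t)\leq C e^{-ct}$ directly from the gap estimate, and write Duhamel's formula for the difference $\boldsymbol{w}:=\boldsymbol{v}-\boldsymbol{z}$,
\begin{equation*}
\boldsymbol{w}(t) = \int_0^t e^{(t-s)\mathcal L^\varepsilon}\Bigl(\dot\xi\,\partial_\xi \boldsymbol{U}^\varepsilon + \Omega^\varepsilon + \mathcal Q(\boldsymbol{v})\Bigr)(s)\,ds.
\end{equation*}
Applying $\|e^{(t-s)\mathcal L^\varepsilon}\|\leq Ce^{-c(t-s)}$ and integrating, the residual forcing contributes at most $C|\Omega^\varepsilon|_{{}_{L^\infty}}\int_0^t e^{-c(t-s)}\,ds\leq C|\Omega^\varepsilon|_{{}_{L^\infty}}$, which is the first term in the claimed bound. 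The quadratic term is controlled by a bootstrap: assuming a priori that $|\boldsymbol{v}|_{{}_{L^2}}$ remains of the order of the initial data on $[0,T^\varepsilon]$, the estimate $|\mathcal Q(\boldsymbol{v})|_{{}_{L^2}}\leq C|\boldsymbol{v}|^2_{{}_{L^2}}$ feeds back through Duhamel to produce the $C|\boldsymbol{v}_0|^2_{{}_{L^2}}$ contribution, while $\dot\xi$ is controlled through the $\xi$-equation, whose right-hand side is itself of order $|\Omega^\varepsilon|_{{}_{L^\infty}}+|\boldsymbol{v}|_{{}_{L^2}}$. Combining these yields $|\boldsymbol{w}|_{{}_{L^2}}(t)\leq C\bigl(|\Omega^\varepsilon|_{{}_{L^\infty}}+C|\boldsymbol{v}_0|^2_{{}_{L^2}}\bigr)$, and the bootstrap closes precisely while $t\leq T^\varepsilon$ with $T^\varepsilon$ of order $|\Omega^\varepsilon|_{{}_{L^\infty}}^{-1}$; since $|\Omega^\varepsilon|_{{}_{L^\infty}}\to0$ as $\varepsilon\to0$, this forces $T^\varepsilon\to+\infty$.

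The hard part will be making the bootstrap self-consistent. One must verify that the quadratic bound on $\mathcal Q$ and the semigroup gap are genuinely \emph{uniform in $\xi$}, since the base point of the linearization drifts in time as the interface moves; this requires controlling how $\mathcal L^\varepsilon$, its spectral projection, and $\partial_\xi \boldsymbol{U}^\varepsilon$ vary with $\xi$ and showing the variation is dominated by $\dot\xi$, which is itself small. Equally delicate is ensuring that the accumulation of the residual forcing over the long interval $[0,T^\varepsilon]$ does not defeat the exponential decay of the semigroup: this is exactly where the scaling $T^\varepsilon\sim|\Omega^\varepsilon|_{{}_{L^\infty}}^{-1}$ emerges as the sharp threshold, beyond which the interface would approach its equilibrium configuration $\bar\xi$ and the linearization around $\boldsymbol{U}^\varepsilon$ would cease to be valid.
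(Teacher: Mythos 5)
Your overall strategy---project off the slow mode, use exponential decay on the complement, and bootstrap the derivative-free quadratic nonlinearity $|\boldsymbol{\mathcal Q}^\varepsilon|_{L^1}\leq C|\boldsymbol{v}|^2_{L^2}$---matches the spirit of the paper's proof of Theorem \ref{teo1}, and you correctly identify the structural feature of reaction-diffusion systems that makes the full nonlinear system tractable. However, there are two genuine gaps. The first is the frozen-semigroup Duhamel formula: the operator here is $\mathcal L^\varepsilon_{\xi(t)}$, non-autonomous because the base point drifts, and moreover non-self-adjoint when $n>1$, so the eigenvalue condition $\mathrm{Re}\,\lambda^\varepsilon_k\leq -C$ for $k\geq 2$ does not by itself yield $\|e^{t\mathcal L^\varepsilon}\|\leq Ce^{-ct}$ on the complementary subspace. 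You flag this as ``the hard part,'' but its resolution is essentially the entire content of the paper's proof: one expands $\boldsymbol{v}=\sum_j v_j(t)\,\bphi^\varepsilon_j(x;\xi(t))$ in the moving eigenbasis (this is where semi-simplicity in {\bf H2} and the compact-resolvent discussion enter), absorbs the commutator terms $\dot\xi\,\partial_\xi\bphi^\varepsilon_j$ into the forcing $\boldsymbol{F}$ using hypothesis {\bf H3}, defines $\boldsymbol{z}$ through the path-dependent factors $E_k(0,t)=\exp\bigl(\int_0^t\lambda^\varepsilon_k(\xi(\tau))\,d\tau\bigr)$ rather than as $e^{t\mathcal L^\varepsilon}\boldsymbol{v}_0$, and controls the resulting series by $\sum_{k\geq 2}E_k(s,t)\leq C(t-s)^{-1/2}E_2(s,t)$. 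None of this machinery is replaceable by the autonomous semigroup estimate you assert.

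The second gap is more serious because it concerns the mechanism of the result itself: your explanation of the finite horizon $T^\varepsilon$ is unsound. With a uniform gap $c>0$ on the complement, the residual forcing cannot ``accumulate'': $\int_0^t e^{-c(t-s)}|\Omega^\varepsilon|_{L^\infty}\,ds\leq |\Omega^\varepsilon|_{L^\infty}/c$ for \emph{all} $t$, so your scheme would close globally in time and produce no threshold whatsoever. In the paper the time restriction comes from the first eigenvalue $\lambda^\varepsilon_1$, which is small in $\varepsilon$ but has no prescribed sign (it is positive for Allen-Cahn with Neumann conditions): the proof works with the weighted quantity $N(t)=\sup_{s\in[0,t]} E_1(s,0)\,|(\boldsymbol{v}-\boldsymbol{z})(s)|_{L^2}$, derives the quadratic inequality $N\leq AN^2+B$ with $A=C_1E_1(0,t)$ and $B=C_2\bigl(|\Omega^\varepsilon|_{L^\infty}+|\boldsymbol{v}_0|^2_{L^2}E_1(t,0)\bigr)$, and the closure condition $4AB<1$, which reads $4C_1C_2\bigl(E_1(0,t)|\Omega^\varepsilon|_{L^\infty}+|\boldsymbol{v}_0|^2_{L^2}\bigr)<1$, is exactly what fails in finite time when $\lambda^\varepsilon_1>0$; this yields $T^\varepsilon$ of order $\ln|\Omega^\varepsilon|^{-1}_{L^\infty}\,\sup_\xi|\lambda^\varepsilon_1(\xi)|^{-1}$ (the form stated in Theorem \ref{teo1}; the introduction's $|\Omega^\varepsilon|^{-1}_{L^\infty}$ is a paraphrase of the same exponentially long scale). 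Since your argument never involves $\lambda^\varepsilon_1$ at all, the exponentially long metastable time scale---the actual point of the theorem---is never derived: you arrive at a statement of the right shape by a route that cannot produce it.
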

Theorem \ref{teointro} states that the solution $\boldsymbol{v}$ can be decomposed as the sum of two functions, $\boldsymbol{v}=\boldsymbol{z}+\boldsymbol{R}$, where $\boldsymbol{z}$ is given explicitly in terms of the wave speed measure $\xi(t)$ (for more details see section 3) and has a very fast decay in time, while  the error $\boldsymbol{R}$ is estimated. This estimate can be used to decouple the original system and leads us to the statement of the Proposition \ref{cor:metaL2}, providing the following estimate for interface position
\begin{equation*}
|\xi(t)-\bar \xi| \leq   |\xi_0|e^{-\beta^\varepsilon t},
\end{equation*} 
where $\bar \xi $ indicates the asymptotic value for $\xi$,  and $\beta^\varepsilon >0$ converges to zero as $\varepsilon \to 0$.
This formula explicitly shows how the motion of the interface towards its equilibrium configuration is much slower as $\varepsilon$ becomes smaller.

\vskip0.2cm
We point out that usually the eigenvalue analysis of the linearized problem around the steady solution does not show  how to understand the transition from the metastable state to the final stable state. In this paper, Theorem \ref{teo1} and Proposition \ref{cor:metaL2} give a good qualitative explanation of this transition, providing an explicit estimate for the size of the parameter $\xi$, together with an expression for the speed rate of convergence of the time dependent solution towards the equilibrium. Also, an explicit expression for the time of such convergence is given, showing that the metastable phase of the dynamics persists for an exponentially (with respect to the parameter $\e$) long time interval. 

\section{The metastable dynamics}

Given $\ell>0$, $I:=(-\ell,\ell)$ and $n\in\N$, 
we consider a system of reaction-diffusion equations
\begin{equation}\label{RDsystem}
\partial_t \boldsymbol{u} = \varepsilon \partial^2_x \boldsymbol{u} - \boldsymbol{f} (\boldsymbol{u}), \quad \boldsymbol{u} (t,0)= \boldsymbol{u}_0 (x)
\end{equation}
for the unknown $\boldsymbol{u}(x,\cdot) \,:\,[0,+\infty)\to [L^2(I)]^n$, and $\boldsymbol{f}: [L^2(I)]^n \to \R^n$, $\boldsymbol{f} = (f_1(\boldsymbol{u}), . . . . , f_n(\boldsymbol{u}))$. System \eqref{RDsystem} is also complemented with appropriate boundary conditions.

We require $\boldsymbol{f}$  to be a $C^1$ function such that there exist at least two constant  stable equilibria for the system.

In the specific case of systems with a  gradient structure, the above hypothesis is satisfied if one assumes that  there exists a $C^2$ potential function $W: \R^n \to \R$ such that $\boldsymbol{f}(\boldsymbol{u})=\nabla W(\boldsymbol{u})$ and such that $W$ has two distinct global minima $\pm \boldsymbol{u}^*$, i.e.
\vskip.15cm
{\bf i.}   $W(\pm \boldsymbol{u^*} ) = \nabla W(\pm \boldsymbol{u^*})=0$,
\vskip.15cm
{\bf ii.} $W(\boldsymbol{u}) >0$ in a neighborhood of $\pm \boldsymbol{u^*}$.
\vskip0.15cm
\noindent In this case it is known (see, among others, \cite{AliFus08, Ste87} ) that the only possible stable equilibrium solutions to \eqref{RDsystem} are minimizers of the energy functional
\begin{equation*}
\mathcal{I}(\boldsymbol{u}) = \int_I\left( \frac{1}{2}\varepsilon |\partial_x \boldsymbol{u}|^2+W(\boldsymbol{u})\right) \, dx.
\end{equation*}
As usual, we define
\begin{equation*}
	\langle \, \boldsymbol{u}, \boldsymbol{ v} \, \rangle:=\int_{I} \boldsymbol{u}(x)\cdot \boldsymbol{ v}(x)\,dx,
	\qquad\qquad \boldsymbol{u},\boldsymbol{ v}\in [L^2(I)]^n
\end{equation*}
the scalar product in $[L^2(I)]^n$, where $\cdot$ denotes the usual scalar product in $\R^n$.
\vskip0.2cm
We are interested in studying the behavior of $\boldsymbol{u}$ in the vanishing viscosity limit $\varepsilon \to 0$, in order to show that, when $\varepsilon \sim 0$, a metastable behavior occurs for the solutions to  \eqref{RDsystem}. For later use, let us define the nonlinear differential operator
\begin{equation*}
\mathcal F^\varepsilon[\boldsymbol{u}] := \varepsilon \partial_x^2 \boldsymbol{u} -\boldsymbol{ f}(\boldsymbol{u}).
\end{equation*}
The strategy we use here is analogous to the one performed in \cite{MS}; given a one-dimensional open interval $J$, we define the one-parameter family $\{\boldsymbol{ U}^{\varepsilon}(\cdot;\xi)\,:\,\xi\in J\}$, $\boldsymbol{U}^\varepsilon=(U_1^\varepsilon, . . . . , U_n^\varepsilon)$, whose elements can be considered as an approximation for the stationary
solution to \eqref{RDsystem} with a single internal layer, in the sense that ${\mathcal F}^\varepsilon[\boldsymbol U^{\varepsilon}(\cdot;\xi)] \to 0$ as $\varepsilon\to 0$. 

Precisely, we assume that the term $\mathcal F^\varepsilon[\boldsymbol U^{\varepsilon}]$ belongs 
to the dual space of the continuous functions space $[C(I)]^n$ and that there exists a family of smooth
positive  functions $\Omega^\varepsilon=\Omega^\varepsilon(\xi)$, uniformly convergent
to zero as $\varepsilon\to 0$,  such that there holds
\begin{equation}\label{defOmegaeps}
	|\langle \boldsymbol{\psi}(\cdot),{\mathcal F}^\varepsilon[\boldsymbol{U}^{\varepsilon}(\cdot,\xi)]\rangle|
		\leq \Omega^\varepsilon(\xi)\,|\boldsymbol{\psi}|_{{}_{\infty}} 
		\qquad \forall\,\boldsymbol{\psi}\in [C(I)]^n,
\end{equation}
for any $\xi\in J$. Hence, we ask for each element of the family to satisfy the stationary problem up to an error that is small in $\varepsilon$, and that is measured by $\Omega^\varepsilon$. 

Additionally, we require that there exists $\bar \xi \in J$ such that $\boldsymbol{U}^\varepsilon(x;\bar \xi)$ is a stable stationary solution, that is ${\mathcal F}^\varepsilon[\boldsymbol{ U}^{\varepsilon}(x;\bar \xi)]=0$. In particular, the parameter $\xi$ describes the dynamics of the solutions along the family $\{\boldsymbol{U}^\varepsilon\}$, and characterizes the slow motion of the interface; hence, the convergence of $\xi$ towards $\bar \xi$ describes the convergence of a layered solution towards its asymptotic configuration.

\begin{example}
{\rm The most well know example of scalar reaction-diffusion equation is given by the Allen-Cahn equation
\begin{equation}\label{ACesempio}
\d_t u=\e^2 \d_x^2 u- f(u).
\end{equation}
As in all the standard examples, we here choose $f$ to be the derivative of the double well potential $W(u)=\frac{1}{4} (u^2-1)^2$ (which has two distinct minima in $\pm u^*=\pm 1$) and we complement \eqref{ACesempio} with Dirichlet boundary condition $u(\pm1)=\pm1$; hence, we are imposing at the boundary the function $u$ to be equal to the pure phases $\pm u^*$. 

When considering the stationary equation $\e^2 \d_x^2 u+ u-u^3=0$ it is possible to prove (see, for example, \cite{BerBraButPre}) that there exists a unique stationary solution which is globally attractive, and it is given by a monotone profile connecting the pure phases $\pm u^*$.

In order to build up  a generic element of the family $\{ U^\e(x;\xi) \}$, one possible idea is to consider, for some $\xi \in I$, the function obtained by gluing together  the stationary solutions in the intervals $(-1,\xi)$ and $(\xi,1)$ satisfying, respectively,  the left and the right boundary conditions and the additional request $U^\e(\xi)=0$. Precisely
\begin{equation}\label{approxU}
	U^{\varepsilon}(x;\xi)=\left\{\begin{aligned}
		& k_1 \tanh \left((\xi-x)/\sqrt{2}\varepsilon\right) &\qquad &{\rm in}\quad (-1,\xi) \\
		& k_2 \tanh\left((\xi-x)/\sqrt{2}\varepsilon\right) &\qquad &{\rm in}\quad (\xi,1),
           \end{aligned}\right.
\end{equation}
and, by imposing $U^\e(\pm 1)=\pm 1$, it turns out
\begin{equation*}
k_1 = \frac{e^{(1+\xi)}+ e^{-(1+\xi)}}{e^{(1+\xi)}- e^{-(1+\xi)}} \quad {\rm and} \quad k_2 = \frac{e^{(1-\xi)}+ e^{-(1-\xi)}}{e^{(1-\xi)}- e^{-(1-\xi)}}.
\end{equation*}
The function defined in \eqref{approxU} is a continuous function but its first derivative has a jump, so that a straightforward computation shows that ${\mathcal F}^\varepsilon[U^{\varepsilon}(x;\xi)]=\ldbrack \partial_x U^{\varepsilon}\rdbrack_{{}_{x=\xi}}\delta_{{}_{x=\xi}}$, being
\begin{equation*}
\ldbrack \partial_x U^{\varepsilon}\rdbrack_{{}_{x=\xi}} \sim |k_1-k_2| \sim \frac{e^{-\sqrt{2} \xi /\e} + e^{-\sqrt{2}/\e}}{e^{\sqrt{2}/\e}-e^{\sqrt{2}\xi/\e}},
\end{equation*}
and this term is going to zero as $\e \to 0$ either if $\xi>0$ or if $\xi <0$ (in this last case the leading order term is of the form $e^{-\frac{\sqrt{2}}{\e}(\xi+1)}$ with $\xi >-1$). In particular, with such a construction, assumption \eqref{defOmegaeps} is satisfied with $\Omega^\e(\xi) = \ldbrack \partial_x U^{\varepsilon}\rdbrack_{{}_{x=\xi}}$.

\vskip0.2cm
Another possible construction  for the approximate family $\{ U^\e \}$ is to look  for a solution to \eqref{ACesempio} of traveling wave's type, i.e.
\begin{equation*}
u(x,t)= \Phi^\e(x-C^\e(\xi) t),
\end{equation*}
satisfying the additional assumptions $v^\e(\pm 1)=\pm 1$ and $v^\e(\xi)=0$, for some $\xi \in (-1,1)$; such solution moves with speed $C^\e(\xi)$ and satisfies the boundary condition only if $t=0$. Going further, since ${\mathcal F}^\varepsilon[v^{\varepsilon}]= \d_t v^\e$, we get
\begin{equation*}
{\mathcal F}^\varepsilon[v^{\varepsilon}(\cdot;\xi)]= - C^\e(\xi) \cdot \d_x v^\e (\cdot;\xi). 
\end{equation*}
so that the behavior of ${\mathcal F}^\varepsilon[v^{\varepsilon}]$ is encoded in the behavior of the speed $C^\e$, which is expected to be small with respect to $\e$.

}

\end{example}

\vskip0.5cm
Going back to our analysis, in order to describe the dynamics of solutions localized far away from the stable configuration, we linearize the original system \eqref{RDsystem} around an element of the family $\{\boldsymbol{U}^\varepsilon\}$ by looking for a solution to the initial value problem \eqref{RDsystem} in the form 
\begin{equation*}
	\boldsymbol{u}(x,t)=\boldsymbol U^{\varepsilon}(x;\xi(t))+\boldsymbol{ v}(x,t),
\end{equation*} 
with $\xi=\xi(t)\in J$  and the perturbation $\boldsymbol{ v}=\boldsymbol{v}(x,t)\in [L^2(I)]^n$ to be determined. Essentially,  the problem is recast in a sort of slow-fast dynamics (being $\xi$ the slow variable and $v$ the fast one); since we mean to describe the dynamics up to the formation of the interface and we want to follow its evolution towards the asymptotic limit, we thus suppose that the parameter $\xi$ depends on time, so that its evolution  describes the asymptotic convergence of the interface towards the equilibrium. Substituting into \eqref{RDsystem}, we obtain
\begin{equation}\label{eqv}
	\partial_t \boldsymbol{ v}=\mathcal{L}^\varepsilon_\xi \boldsymbol{v}+{\mathcal F}^\varepsilon[\boldsymbol U^{\varepsilon}(\cdot;\xi)]
		-\partial_{\xi} \boldsymbol U^{\varepsilon}(\cdot;\xi)\,\frac{d\xi}{dt}+\boldsymbol{ \mathcal{Q}}^\varepsilon[\boldsymbol{v},\xi],
\end{equation}
where
\begin{equation*}
	\begin{aligned}
		\mathcal{L}^\varepsilon_\xi \boldsymbol{ v}&:=d{\mathcal F}^\varepsilon[\boldsymbol U^{\varepsilon}(\cdot;\xi)]\,\boldsymbol{ v},\\
		\boldsymbol{\mathcal{Q}}^\varepsilon[\boldsymbol{ v},\xi]&:={\mathcal F}^\varepsilon[\boldsymbol U^{\varepsilon}(\cdot;\xi)+\boldsymbol{ v}]
			-{\mathcal F}^\varepsilon[\boldsymbol U^{\varepsilon}(\cdot;\xi)]
				-d{\mathcal F}^\varepsilon[\boldsymbol U^{\varepsilon}(\cdot;\xi)]\, \boldsymbol{v}.
	\end{aligned} 
\end{equation*}
Precisely, in the specific case considered here, $\mathcal L^\varepsilon_\xi$ has the following expression
\begin{equation*}
\mathcal L^\varepsilon_\xi  \boldsymbol{v} := \varepsilon \partial_x^2  \boldsymbol{v} - \boldsymbol{f}'( \boldsymbol U^\varepsilon)  \boldsymbol{v}.
\end{equation*}
Hence, when $n=1$, it is easy to check that $\mathcal L^\varepsilon$ is self-adjoint and therefore its spectrum is composed by real eigenvalues. In the general case $n >1$, the self adjointness of the operator is not guaranteed, and we have to consider the chance of having complex eigenvalues.
\vskip0.2cm
Next, let us require the linear operator $\mathcal{L}^\varepsilon_\xi$ to have a discrete spectrum
composed by semi-simple eigenvalues $\{ \lambda_k^\varepsilon(\xi) \}_{k \in \N}$ with corresponding right eigenfunctions $\bphi^\varepsilon_k=\bphi^\varepsilon_k(\cdot;\xi)$. 
Moreover, we assume that for any $\xi\in J$ the first eigenvalue $\lambda_1^\varepsilon$ is simple, real and such that $\lim\limits_{\varepsilon \to 0} \lambda_1^\varepsilon (\xi) =0,$ uniformly with respect to $\xi$; additionally, we suppose there hold
\begin{equation}\label{assumpintroeigen}
\begin{aligned}
            \lambda^\varepsilon_1(\xi)-Re \,\lambda^\varepsilon_2(\xi) \geq C, \qquad
	 Re\,\lambda^\varepsilon_k(\xi)\leq -C\quad \textrm{for }k\geq 2,
	\end{aligned}
\end{equation}
for some constant $C>0$ independent on $k$, $\varepsilon$ and $\xi$.

\begin{remark}
\rm{We note that there are no requests on the sign of the first eigenvalue $\l^\e_1$. Indeed, the metastable behavior is a consequence only of the smallness, with respect to $\e$, of the absolute value of such first eigenvalue. Also, assumption \eqref{assumpintroeigen}$_1$ states that there is only one critical eigenvalue, and it is compatible with the case of a single transition layer we are considering in this paper. In the general case of $N \geq 1$ layers, it has been shown in \cite{CarrPego89} that there exist exactly $N$ small (with respect to $\varepsilon$) eigenvalues. 

}
\end{remark}

If we now denote with  $\bpsi^\varepsilon_k=\bpsi^\varepsilon_k(\cdot;\xi)$ the eigenfunctions of  the adjoint operator 
$\mathcal{L}^{\varepsilon,\ast}_\xi$, we can define
\begin{equation*}
	v_k=v_k(\xi;t):=\langle \bpsi^\varepsilon_k(\cdot;\xi),\boldsymbol{v}(\cdot,t)\rangle.
\end{equation*}
Since we have assumed the first eigenvalue of the linearized operator to be small in $\e$, i.e. $\lambda_1^\e \to 0$ as $\e \to 0$, a necessary condition for the solvability of \eqref{eqv} is that  the first component of the solution $v_1$ has to be zero. This request translates into an equation for the parameter $\xi(t)$, chosen in such a way that the unique growing terms in the perturbation $\boldsymbol{ v}$ are canceled out. Precisely, we require
\begin{equation*}
	\frac{d}{dt} \langle \bpsi^\varepsilon_1(\cdot;\xi(t)), \boldsymbol{ v}(\cdot,t) \rangle =0
	\qquad\textrm{and}\qquad
	\langle \bpsi^\varepsilon_1(\cdot;\xi_0), \boldsymbol{ v}_0(\cdot)\rangle=0.
\end{equation*}
Using equation \eqref{eqv}, and since $\langle \bpsi^\varepsilon_1, {\mathcal L}^\varepsilon_{\xi} \boldsymbol{v} \rangle= \lambda^\varepsilon_1\langle \bpsi^\varepsilon_1, \boldsymbol{ v} \rangle =0$, 
we obtain a scalar differential equation for the variable $\xi$, describing the dynamics in a neighborhood of the approximate family, that is
\begin{equation}\label{eqxi0}
		\alpha^\varepsilon(\xi,\boldsymbol{v})\frac{d\xi}{dt}=\langle \bpsi^\varepsilon_1(\cdot;\xi),
			{\mathcal F}[\boldsymbol{U}^{\varepsilon}(\cdot;\xi)]+\boldsymbol{\mathcal{Q}}^\varepsilon[\boldsymbol{ v},\xi] \rangle,
\end{equation}
where
\begin{equation*}	
	\alpha^\varepsilon(\xi,\boldsymbol{v})
			:= \langle \bpsi^\varepsilon_1(\cdot;\xi), \partial_{\xi}\boldsymbol{U}^{\varepsilon}(\cdot;\xi) \rangle - \langle  \partial_{\xi}\bpsi^\varepsilon_1(\cdot;\xi),\boldsymbol{ v} \rangle,
\end{equation*}
together with the condition on the initial datum $\xi_0$
\begin{equation*}
	\langle \bpsi^\varepsilon_1(\cdot;\xi_0), \boldsymbol{v}_0(\cdot) \rangle =0.
\end{equation*}
For the sake of simplicity, we renormalize the first eigenfunction 
$\bpsi^\varepsilon_1$ in such a way
\begin{equation*}
	\langle \bpsi^\varepsilon_1(\cdot;\xi), \partial_{\xi}\boldsymbol{U}^{\varepsilon}(\cdot;\xi)\rangle=1.
\end{equation*}
Such constraint can be imposed if we ask for
\begin{equation*}
|\langle \bpsi^\varepsilon_1(\cdot;\xi), \partial_{\xi}\boldsymbol{U}^{\varepsilon}(\cdot;\xi)\rangle| \geq c_0 >0,
\end{equation*}
where $c_0$ is independent on $\xi$. The last assumption gives a (weak) restriction on the choice of the  family of functions $\{ \boldsymbol{U}^\varepsilon \}$; indeed, we need the manifold to be never transverse to the first eigenfunction $\bpsi^\varepsilon_1$.

Going further, in the regime of small $\boldsymbol{v}$, we have
\begin{equation*}
	\frac{1}{\alpha^\varepsilon(\xi,\boldsymbol{v})} = 1+
		\langle \partial_{\xi} \bpsi^\varepsilon_1,\boldsymbol{v} \rangle + R^\varepsilon[\boldsymbol{ v}],
		\end{equation*}
where the reminder $R^\varepsilon$ is of order $o(|\boldsymbol{ v}|)$, and it is defined as
\begin{equation*}
R^\varepsilon[\boldsymbol{v}]:= \frac{\langle  \partial_{\xi}\bpsi^\varepsilon_1(\cdot;\xi),\boldsymbol{v} \rangle^2}{1-\langle  \partial_{\xi}\bpsi^\varepsilon_1(\cdot;\xi),\boldsymbol{ v} \rangle}.
\end{equation*}
Hence, we may rewrite the nonlinear equation for $\xi$ as
\begin{equation}\label{eqxiNL}
	\frac{d\xi}{dt}=\theta^\varepsilon(\xi)\bigl(1+\langle\partial_{\xi} \bpsi^\varepsilon_1, \boldsymbol{v} \rangle\bigr)
		+ \rho^\varepsilon[\xi,\boldsymbol{v}], 
\end{equation}
where
\begin{equation*}
	\begin{aligned}
 	\theta^\varepsilon(\xi)
		&:=\langle \bpsi^\varepsilon_1,{\mathcal F[\boldsymbol{U}^{\varepsilon}] \rangle},\\
	\rho^\varepsilon[\xi,\boldsymbol{ v}]&:=\frac{1}{\alpha^\varepsilon(\xi,\boldsymbol{ v})}
		\bigl(\langle \bpsi^\varepsilon_1,\boldsymbol{\mathcal{Q}}^\varepsilon\rangle
		+\langle \partial_\xi\bpsi^\varepsilon_1, \boldsymbol{v}\rangle^2\bigr).
	\end{aligned}
\end{equation*}
Equation \eqref{eqxiNL} is an equation of motion for the parameter, describing the dynamics of $\xi(t)$  around the family of approximate steady states; since an element of the family $\{ \boldsymbol{U}^{\varepsilon}(x;\xi)\}_{\xi \in J}$ is not an exact steady state to \eqref{RDsystem}, the dynamics walks away from $\boldsymbol{U}^{\varepsilon}$ with a speed dictated by \eqref{eqxiNL}. In particular, by taking advantage of the existence of a small eigenspace, we  reduce the PDE to an ODE for the  parameter $\xi$.

We observe that, at a first approximation, for small perturbations $\boldsymbol{v} \sim 0$, the leading order term in the equation for $\xi(t)$, given by $\theta^\varepsilon(\xi)$, characterizes the speed of the solution during its motion towards its equilibrium configuration. From the definition of $\theta^\varepsilon$ we can formally see that such convergence is much slower as $\varepsilon$ becomes smaller, since $\theta^\varepsilon$ is going to zero as $\varepsilon \to 0$.

Using \eqref{eqxiNL}, we can rewrite the equation for $\boldsymbol{v}$ as
\begin{equation}\label{eqvNL}
	\partial_t \boldsymbol{v}=\boldsymbol{ H}^\varepsilon(x;\xi)
		+ ({\mathcal L}^\varepsilon_\xi+{\mathcal M}^\varepsilon_\xi)\boldsymbol{ v}
			+\boldsymbol{\mathcal{R}}^\varepsilon[\boldsymbol{v},\xi],
\end{equation}
where 
\begin{align*}
		\boldsymbol{H}^\varepsilon(\cdot;\xi)&:={\mathcal F}^\varepsilon[\boldsymbol{U}^{\varepsilon}(\cdot;\xi)]
			-\partial_{\xi}\boldsymbol{U}^{\varepsilon}(\cdot;\xi)\,\theta^\varepsilon(\xi),\\
		\mathcal M^\varepsilon_\xi \boldsymbol{v}&:=-\partial_{\xi}\boldsymbol{U}^{\varepsilon}(\cdot;\xi)
			\,\theta^\varepsilon(\xi)\,\langle\partial_{\xi} \bpsi^\varepsilon_1, \boldsymbol{ v} \rangle,\\ 
		\boldsymbol{\mathcal R}^\varepsilon[\boldsymbol{v},\xi]&:=\boldsymbol{\mathcal{Q}}^\varepsilon[\boldsymbol{ v},\xi] -\partial_{\xi}\boldsymbol{U}^{\varepsilon}(\cdot;\xi)\,\rho^\varepsilon[\xi,\boldsymbol{ v}],
\end{align*}
obtaining the couple system \eqref{eqxiNL}-\eqref{eqvNL} for the parameter $\xi$ and the perturbation $v$.

\begin{remark}{\rm
We observe that, in the case of $N >1$ internal layers the parameter $\xi \in \R^N$ and,  as proven in \cite{CarrPego89},  there exist exactly $N$ small (with respect to $\varepsilon$) eigenvalues. In particular, it is possible to adapt the procedure described above by projecting the equation into the  correspondingly $N$ small eigenspaces, obtaining exactly $N$ equations of motion for $\xi_i(t)$, $i=1, \dots , N$.

}
\end{remark}

\begin{example}\label{ExAC}{\rm

{\bf The case n=1.} Let us consider again the initial boundary value problem for Allen-Cahn equation in a one-dimensional interval $I=(-\ell,\ell)$, that is
\begin{equation}\label{cauchyAC}
 \left\{\begin{aligned}
		&\partial_t u 	 =\varepsilon\,\partial_x^2u -f(u)
		&\qquad &x\in I, t \geq 0\\
		&u(x,0)		 =u_0(x) &\qquad &x\in I,
 	 \end{aligned}\right.
\end{equation}
complemented with either Dirichlet or Neumann boundary condition.  As usual, we assume $f(u)=W'(u)$, where $W(u)$ is a double well function with equal minima in $u=\pm 1$. We require
\begin{equation*}
W(\pm 1)=W'(\pm 1)=0, \quad W(u) >0 \ \ {\rm for} \ \ -1< u <1
\end{equation*}
A standard example is given by  $W(u)=\frac{(u^2-1)^2}{4}$, which has two minima in $u=\pm 1$.

Given a reference state $U^\varepsilon(x;\xi)$ such that the nonlinear term
$
\mathcal F^\varepsilon[U^\varepsilon]=\varepsilon \partial_x^2 U^\varepsilon-f(U^\varepsilon)$ is going to zero as $\varepsilon \to 0$ in the sense of \eqref{defOmegaeps}, if we linearize around $U^\varepsilon$ we obtain the following equation for the perturbation 
\begin{equation*}
\partial_t v = \varepsilon \partial^2_x v - f'(U^\varepsilon) v - \partial_\xi U^\varepsilon \frac{d\xi}{dt} + \mathcal F^\varepsilon[U^\varepsilon] + \mathcal Q^\varepsilon[\xi,v],
\end{equation*}
where $ \mathcal Q^\varepsilon[\xi,v]:=-f''(U^\varepsilon) v^2-f'''(U^\varepsilon)v^3$. In particular, since we are assuming the perturbation $v$ to be small, i.e. $v \ll 1$, it follows that $|\mathcal Q^\varepsilon|_{{}_{L^1}} \leq C|v|^2_{{}_{L^2}}$.

We note that, in this case, the parameter $\xi$ can be chosen as the position of the (unique) interface, and its motion characterizes the slow dynamics of the  layered solution towards the patternless equilibrium.

\vskip0.2cm

{\bf The case n=2.} Let us consider a system of two reaction-diffusion equations, namely
\begin{equation}\label{ex1}
\left\{\begin{aligned}
\partial_t u &= \varepsilon \partial^2_x u - f_1(u,v) \\
\partial_t v &= \varepsilon \partial^2_x v - f_2(u,v)
\end{aligned}\right.
\end{equation}
and let $\boldsymbol U^\varepsilon=(U^\varepsilon, V^\varepsilon)$ be an approximate steady state of the problem, meaning that
\begin{equation*}
\mathcal F^\varepsilon [\boldsymbol U^\varepsilon] := \left( \begin{aligned}  \varepsilon \partial^2_x U^\varepsilon& - f_1(U^\varepsilon,V^\varepsilon) \\
\varepsilon \partial^2_x U^\varepsilon& - f_2(U^\varepsilon,V^\varepsilon) \end{aligned}\right) \end{equation*}
is small in $\varepsilon$ in the sense of \eqref{defOmegaeps}. By susbstituing 
$$(u,v) (x,t)= (w,z)(x,t) + (U^\varepsilon(x;\xi(t)), V^\varepsilon(x;\xi(t)))$$
into \eqref{ex1}, and by expanding $f_{1,2}(w+ U^\varepsilon, z+V^\varepsilon)$, we obtain the equation \eqref{eqv} for the perturbation $\boldsymbol{ v}=(w,z)$, where
\begin{equation*}
\mathcal L^\varepsilon_\xi \boldsymbol{v}:= \left( \begin{aligned} \varepsilon \partial_x^2 w &-\partial_u f_1(U^\varepsilon, V^\varepsilon) w - \partial_v f_1 (U^\varepsilon, V^\varepsilon) z \\
\varepsilon \partial_x^2 z &-\partial_u f_2(U^\varepsilon, V^\varepsilon) w - \partial_v f_2 (U^\varepsilon, V^\varepsilon) z
\end{aligned}\right),
\end{equation*}
and the nonlinear term is explicitly given by
\begin{equation*}
\boldsymbol{\mathcal Q}^\varepsilon [\xi, \boldsymbol{ v}] := \left( \begin{aligned}
-\frac{1}{2}\partial_u^2 f_1(U^\varepsilon,V^\varepsilon) w^2 &- \frac{1}{2}\partial_v^2 f_1(U^\varepsilon,V^\varepsilon) z^2 - \partial_{uv}^2 f_1 (U^\varepsilon;V^\varepsilon) w z \\
-\frac{1}{2}\partial_u^2 f_2(U^\varepsilon,V^\varepsilon) w^2 &- \frac{1}{2}\partial_v^2 f_2(U^\varepsilon,V^\varepsilon) z^2 - \partial_{uv}^2 f_2 (U^\varepsilon;V^\varepsilon) w z
\end{aligned}\right),
\end{equation*}
where the third order terms are omitted since $v \ll 1$.
Again, such formula explicitly shows that $|\boldsymbol{\mathcal Q}^\varepsilon|_{{}_{L^1}} \leq C  |(u,v)|^2_{{}_{L^2}}$.

 From this example one can easily check that, when $n >1$, the operator $\mathcal L^\varepsilon_\xi$ is not necessarily self-adjoint.

\vskip0.5cm
For a general $n \in \N$, the nonlinear term $\boldsymbol{\mathcal Q}^\varepsilon [\xi, \boldsymbol{ v}] $ is defined as
\begin{equation*}
\boldsymbol{\mathcal Q}^\varepsilon [\xi, \boldsymbol{ v}] :=\left( \begin{aligned}
- \sum_{i=1}^n \sum_{j=1}^n \partial_{u_i \, u_j}^2 & f_1 (\boldsymbol{U}^\varepsilon) v_i v_j \\
 .& \\
  .& \\
   .& \\
    .& \\
-  \sum_{i=1}^n \sum_{j=1}^n \partial_{u_i \, u_j}^2 & f_n (\boldsymbol{U}^\varepsilon) v_i v_j  
 \end{aligned}\right)
\end{equation*}
\vskip0.2cm
\noindent so that $|\boldsymbol{\mathcal Q}^\varepsilon|_{{}_{L^1}} \leq C |\boldsymbol{v}|^2_{{}_{L^2}}$ for some constant $C$ depending on $\boldsymbol f $ and $|\boldsymbol{U}^\varepsilon|$.

}
\end{example}

\section{Nonlinear metastability}

Let us consider the system  \eqref{eqxiNL}--\eqref{eqvNL} for the couple $(\xi,\boldsymbol{v})$
\begin{equation}\label{NLS}
 	\left\{\begin{aligned}
	\frac{d\xi}{dt}&=\theta^\varepsilon(\xi)\bigl(1 
		+\langle\partial_{\xi} \psi^\varepsilon_1, \boldsymbol{v} \rangle\bigr)+ \rho^\varepsilon[\xi,\boldsymbol{v}], \\
	\partial_t \boldsymbol{v} &=\boldsymbol{ H}^\varepsilon(\xi)+ ({\mathcal L}^\varepsilon_\xi+{\mathcal M}^\varepsilon_\xi)\boldsymbol{v}+ \boldsymbol{\mathcal R}^\varepsilon[\xi,\boldsymbol{v}]
 	\end{aligned}\right. 
\end{equation}
complemented with initial conditions
\begin{equation}\label{initialLS}
	\xi(0)=\xi_0\in J \qquad\textrm{and}\qquad \boldsymbol{v}(x,0)=\boldsymbol{ v}_0(x):=\boldsymbol{u}_0-\boldsymbol{U}^\varepsilon(\cdot;\xi_0)\in [L^2(I)]^n.
\end{equation}
In what follow, we mean to analyze the solutions to \eqref{NLS}. The main difference with respect to \cite{MS}, is that here we consider the complete system for the couple $(\xi,\boldsymbol{v})$, where also  the nonlinear terms $\boldsymbol{\mathcal R}^\varepsilon$ and $\mathcal \rho^\varepsilon$ are taken into account: as already stressed, in the particular case of reaction-diffusion system, these terms depend only from the perturbation $\boldsymbol{v}$, and not from its derivatives, so that an estimate for the $L^2$ norm of the perturbation  can be performed (sometimes  under appropriate smallness assumptions on the initial datum) without disregarding any higher order term.

\subsection{Estimates for the perturbation $v$}

Before stating our result, we recall the hypotheses we require on the terms of the system. 

\vskip.15cm
{\bf H1.} There exists a family of smooth positive functions $\Omega^\varepsilon$ such that
\begin{equation*}
	|\langle \bpsi(\cdot),{\mathcal F}^\varepsilon[\boldsymbol{U}^{\varepsilon}(\cdot,\xi)]\rangle|
		\leq \Omega^\varepsilon(\xi)\,|\bpsi|_{{}_{L^\infty}} \qquad \forall\,\bpsi\in [C(I)]^n,
\end{equation*}
with $\Omega^\varepsilon$ converging to zero as $\varepsilon\to 0$, uniformly with respect to $\xi$.
\vskip.15cm

{\bf H2.} Let  $\{ \lambda_k^\varepsilon(\xi) \}_{k \in \N}$ 
be the sequence of semi-simple eigenvalues of the linear operator  $\mathcal{L}^\varepsilon_\xi$.
Assume that for any $\xi\in J$, the first eigenvalue $\lambda_1^\varepsilon$ is simple, real and  such that $\lambda_1^\varepsilon(\xi) \to 0$ as $\varepsilon\to 0$ uniformly with respect to $\xi$. Moreover,  there hold
\begin{equation*}
         \lambda^\varepsilon_1(\xi)-Re \,\lambda^\varepsilon_2(\xi) \geq C, \qquad
	Re \, \lambda^\varepsilon_k(\xi)\leq -C\quad \textrm{for }k\geq 2,
\end{equation*}
for some constant $C>0$ independent on $k$, $\varepsilon$ and $\xi$.
\vskip.15cm

{\bf H3.} Given $\xi\in J$, let $\bphi^\varepsilon_k(\cdot;\xi)$ and $\bpsi^\varepsilon_k(\cdot;\xi)$
be a sequence of eigenfunctions for the operators $\mathcal{L}^\varepsilon_{\xi}$ and 
$\mathcal{L}^{\varepsilon,\ast}_{\xi}$ respectively, normalized so that $\langle \bpsi^\varepsilon_j, \bphi^\varepsilon_k \rangle = \delta_{jk}$; we assume 
\begin{equation}\label{derpsiphi}
	\sum_{j} \langle \partial_\xi \bpsi^\varepsilon_k, \bphi^\varepsilon_j\rangle^2
	=\sum_{j} \langle \bpsi^\varepsilon_k, \partial_\xi \bphi^\varepsilon_j\rangle^2
	\leq C,
\end{equation}
for all $k$ and for some constant $C$ independent on the parameter $\xi$. 
\vskip.15cm

\begin{remark}{\rm
Hypothesis {\bf H2} is a crucial hypothesis concerning the distribution of the eigenvalues of the linearized operator around an (approximate) steady state; again, we underline that there are no request on the sign of the first eigenvalue, while it is extremely important its smallness with respect to $\e$ and the presence of a spectral gap, encoded in the request $ \lambda^\varepsilon_1(\xi)-Re \,\lambda^\varepsilon_2(\xi) \geq C$. In other words, what it is crucial is the fact that there exists a finite number of small eigenvalues (which is equals to the number of the layers), while all the other eigenvalues are negative and bounded away from zero. This property indeed will translate into the fact that all the components of the perturbation except the first one will have a very fast decay in time, and in a slow motion for the single internal interface as a consequence of the size of $\lambda_1^\e$.

 For example, in the case of the Allen-Cahn equation \eqref{cauchyAC} with a single transition layer and with Neumann boundary conditions, it is proven (\cite{AliFus96, Bar10} and \cite{CarrPego89} for a linearization around an approximate steady state) that the spectrum is composed by real eigenvalues with the following distribution
\begin{equation*}
 \lambda_1^\varepsilon = \mathcal O( e^{-c_1/\varepsilon})  \quad {\rm and} \quad \lambda_k^\varepsilon \leq -c_2 \quad \forall k \geq 2,
\end{equation*}
where the constants $c_2,c_2>0$ are independent on $\varepsilon$. In particular, since the metastable layered solution is an unstable configuration for the system,  $\lambda_1^\varepsilon >0$. 

For systems, the situation is more delicate (see, as an example, \cite{NisFuj86}), and a spectral analysis is needed in order to verify  hypothesis {\bf H2}. In this direction, we quote here the analysis of \cite{Str12}.  Conversely, when rigorous results are not to be realizable, it could be possible to obtain numerical evidence of the spectrum of the linearized operator.

Let us also underline the importance of  the assumption of a {\it discrete spectrum}  composed by  {\it semi-simple eigenvalues}: indeed,  it will be crucial in the proof of the following Theorem (for more details, see the subsequent Remark 3.4).
}
\end{remark}

\begin{theorem}\label{teo1}
Let  the couple $(\xi,v)$ be the solution to the initial-value problem \eqref{NLS}-\eqref{initialLS}. If the hypotheses {\bf H1-2-3} are satisfied, then, for every $\varepsilon$ sufficiently small 
and for every $t \leq T^\varepsilon$, there holds for the solution $\boldsymbol{v}$
\begin{equation*}
|\boldsymbol{v}-\boldsymbol{z}|_{{}_{L^2}}(t) \leq C \left( |\Omega^\varepsilon|_{{}_{L^\infty}} +\exp\left(\int_0^t \lambda_1^\varepsilon(\tau) \, d\tau\right) |\boldsymbol{v}_0|^2_{{}_{L^2}}\right),
\end{equation*}
where the function $\boldsymbol{z}$ is defined as
\begin{equation*}
\boldsymbol{z}(x,t):=\sum_{k\geq 2} v_k(0)\,\exp \left( \lambda_k^\varepsilon(\xi(\tau)) \right)\,\bphi^\varepsilon_k(x;\xi(t)).
\end{equation*}
Moreover, the time $T^\varepsilon$ is of order $\ln |\Omega^\varepsilon|^{-1}_{{}_{L^\infty}}\,  \sup\limits_{\xi\in J} |\lambda_1^\varepsilon (\xi)|^{-1}$, hence diverging to $+\infty$ as $\varepsilon \to 0$.

\end{theorem}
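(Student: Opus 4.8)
The plan is to reduce the PDE system \eqref{NLS} to an infinite family of scalar ODEs for the spectral coordinates $v_k(t):=\langle \bpsi^\varepsilon_k(\cdot;\xi(t)),\boldsymbol{v}(\cdot,t)\rangle$ and to read the decomposition $\boldsymbol{v}=\boldsymbol{z}+(\boldsymbol{v}-\boldsymbol{z})$ directly off their variation-of-constants formula. First I would differentiate $v_k$ in time, using \eqref{eqvNL} and the adjoint eigenrelation $\langle \bpsi^\varepsilon_k,\mathcal{L}^\varepsilon_\xi\boldsymbol{v}\rangle=\lambda^\varepsilon_k v_k$, to obtain $\dot v_k=\lambda^\varepsilon_k v_k+g_k$, where the forcing $g_k$ collects $\langle \bpsi^\varepsilon_k,\boldsymbol{H}^\varepsilon\rangle$, $\langle \bpsi^\varepsilon_k,\mathcal{M}^\varepsilon_\xi\boldsymbol{v}\rangle$, $\langle \bpsi^\varepsilon_k,\boldsymbol{\mathcal{R}}^\varepsilon\rangle$ and the drift $\dot\xi\,\langle \partial_\xi\bpsi^\varepsilon_k,\boldsymbol{v}\rangle$ produced by the $\xi$-dependence of the eigenfunctions. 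By construction of the $\xi$-equation the slow coordinate is killed, $v_1\equiv 0$, so only $k\ge 2$ survive. Recognizing $z_k(t)=v_k(0)\exp\!\big(\int_0^t\lambda^\varepsilon_k(\xi(\tau))\,d\tau\big)$ as the homogeneous solution, the function $\boldsymbol{z}$ of the statement is exactly $\sum_{k\ge 2}z_k\,\bphi^\varepsilon_k$, and $w_k:=v_k-z_k$ solves $\dot w_k=\lambda^\varepsilon_k w_k+g_k$ with $w_k(0)=0$.

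Next I would solve the $w_k$-equations by Duhamel and use the spectral gap. Since \textbf{H2} gives $\mathrm{Re}\,\lambda^\varepsilon_k\le -C$ for $k\ge 2$ and the eigenvalues are semi-simple --- so the stable semigroup carries no polynomial (Jordan) prefactor --- I get $|w_k(t)|\le\int_0^t e^{-C(t-s)}|g_k(s)|\,ds$. To turn coordinate bounds into an $L^2$ bound I would invoke that, under the discrete, semi-simple spectrum with a gap assumed in \textbf{H2} (the property emphasized in the discussion following \textbf{H3}), the eigenfunctions $\{\bphi^\varepsilon_k\}$ form a Riesz basis with bounded biorthogonal projections $\{\bpsi^\varepsilon_k\}$, so that $|\boldsymbol{v}-\boldsymbol{z}|_{{}_{L^2}}^2$ is comparable to $\sum_{k\ge 2}|w_k|^2$. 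This is where the lack of self-adjointness for $n>1$ must be absorbed.

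It then remains to bound the forcing. Using \textbf{H1} together with $\theta^\varepsilon=\langle \bpsi^\varepsilon_1,\mathcal{F}^\varepsilon[\boldsymbol{U}^\varepsilon]\rangle$, the $\Omega^\varepsilon$-linear pieces obey $|\langle \bpsi^\varepsilon_k,\boldsymbol{H}^\varepsilon\rangle|\le C\,\Omega^\varepsilon$ and $|\langle \bpsi^\varepsilon_k,\mathcal{M}^\varepsilon_\xi\boldsymbol{v}\rangle|\le C\,\Omega^\varepsilon|\boldsymbol{v}|_{{}_{L^2}}$; the genuinely nonlinear pieces $\langle \bpsi^\varepsilon_k,\boldsymbol{\mathcal{R}}^\varepsilon\rangle$ and $\rho^\varepsilon$ are quadratic and are controlled through $|\boldsymbol{\mathcal{Q}}^\varepsilon|_{{}_{L^1}}\le C|\boldsymbol{v}|_{{}_{L^2}}^2$ as in Example \ref{ExAC}; and the drift term is handled by Cauchy--Schwarz with \eqref{derpsiphi}, which is precisely the assumption \textbf{H3} guaranteeing $\sum_k\langle \partial_\xi\bpsi^\varepsilon_k,\boldsymbol{v}\rangle^2\le C|\boldsymbol{v}|_{{}_{L^2}}^2$, once multiplied by $\dot\xi=O(\Omega^\varepsilon+|\boldsymbol{v}|_{{}_{L^2}}^2)$ read off from \eqref{eqxiNL}. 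Collecting these, $|g_k(s)|\lesssim c_k\big(\Omega^\varepsilon+|\boldsymbol{v}(s)|_{{}_{L^2}}^2\big)$ with $\{c_k\}$ square-summable uniformly in $\xi$.

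Finally I would assemble the integral inequality and close a continuity (bootstrap) argument. Writing $|\boldsymbol{v}|_{{}_{L^2}}^2\le 2|\boldsymbol{z}|_{{}_{L^2}}^2+2|\boldsymbol{v}-\boldsymbol{z}|_{{}_{L^2}}^2$ and using the gap to bound $|\boldsymbol{z}(s)|_{{}_{L^2}}\le C\,e^{-Cs}|\boldsymbol{v}_0|_{{}_{L^2}}$, the Duhamel integral splits into: an $\Omega^\varepsilon$-part giving the term $C|\Omega^\varepsilon|_{{}_{L^\infty}}$; a part $\int_0^t e^{-C(t-s)}|\boldsymbol{z}(s)|_{{}_{L^2}}^2\,ds\le C\,e^{-Ct}|\boldsymbol{v}_0|_{{}_{L^2}}^2$ which, since $\lambda^\varepsilon_1\ge -C$ gives $e^{-Ct}\le\exp\!\big(\int_0^t\lambda^\varepsilon_1\,d\tau\big)$, yields the second term of the statement; and a self-referential $\int_0^t e^{-C(t-s)}|\boldsymbol{v}-\boldsymbol{z}|_{{}_{L^2}}^2\,ds$ that is higher order and is reabsorbed as long as $|\boldsymbol{v}|_{{}_{L^2}}$ stays small. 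That smallness is self-consistent exactly while the second term is $O(|\Omega^\varepsilon|_{{}_{L^\infty}})$, i.e. while $\exp(\int_0^t\lambda^\varepsilon_1)\lesssim|\Omega^\varepsilon|_{{}_{L^\infty}}\,|\boldsymbol{v}_0|_{{}_{L^2}}^{-2}$, which pins down $T^\varepsilon\sim\ln|\Omega^\varepsilon|_{{}_{L^\infty}}^{-1}\,\sup_{\xi\in J}|\lambda^\varepsilon_1(\xi)|^{-1}$. The hard part will be the second step: producing a Riesz-basis/stable-semigroup estimate that is uniform in $\varepsilon$ and in the moving parameter $\xi$ for a non-self-adjoint operator, so that the modal bounds genuinely dominate the $L^2$ norm; once that reduction is secured, the $\Omega^\varepsilon$-smallness and the quadratic nonlinearity are comparatively routine.
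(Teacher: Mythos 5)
Your proposal follows essentially the same route as the paper's proof: expansion of $\boldsymbol{v}$ along the biorthogonal eigensystem of $\mathcal{L}^\varepsilon_\xi$, the identity $v_1\equiv 0$ enforced by the choice of the $\xi$-equation, Duhamel for the modes $k\geq 2$ with $\boldsymbol{z}$ as the homogeneous part, the bounds $|\langle\bpsi^\varepsilon_k,\boldsymbol{F}\rangle|\lesssim\Omega^\varepsilon(1+|\boldsymbol{v}|^2_{L^2})$ and $|\langle\bpsi^\varepsilon_k,\boldsymbol{G}\rangle|\lesssim|\boldsymbol{v}|^2_{L^2}$ via {\bf H1}, {\bf H3} and $|\boldsymbol{\mathcal{Q}}^\varepsilon|_{L^1}\leq C|\boldsymbol{v}|^2_{L^2}$, and finally a self-referential quadratic inequality closed for times up to $T^\varepsilon$; this is precisely the paper's argument, there phrased as $N\leq AN^2+B\Rightarrow N\leq 2B$ for the weighted supremum $N(t)$.

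There is, however, one genuine gap, and it is not the Riesz-basis point you flag as ``the hard part'' (the paper makes that same implicit assumption when it writes $|\boldsymbol{v}-\boldsymbol{z}|^2_{L^2}\leq\sum_{k\geq2}|v_k-z_k|^2$). The gap is your mode summation. Having replaced the propagators $E_k(s,t)=\exp\bigl(\int_s^t\lambda^\varepsilon_k(\xi(\tau))\,d\tau\bigr)$ by the $k$-independent bound $e^{-C(t-s)}$, you need square-summability in $k$ of the forcing coefficients, $\sum_k c_k^2<\infty$, in order for $\sum_{k\geq2}|w_k|^2$ to be finite. This is not provided by {\bf H1}--{\bf H3}, and it is false in the paper's own motivating example: {\bf H1} bounds $|\langle\bpsi^\varepsilon_k,\mathcal{F}^\varepsilon[\boldsymbol{U}^\varepsilon]\rangle|\leq\Omega^\varepsilon|\bpsi^\varepsilon_k|_\infty$ only uniformly in $k$, since $\mathcal{F}^\varepsilon[\boldsymbol{U}^\varepsilon]$ is merely a measure --- e.g.\ the point mass $\ldbrack\partial_x U^\varepsilon\rdbrack\,\delta_{x=\xi}$ of Example 2.1, whose coefficients behave like $\bpsi^\varepsilon_k(\xi)$ and do not decay in $k$ --- while \eqref{derpsiphi} gives summability in $j$ for fixed $k$, not in $k$; likewise $\boldsymbol{\mathcal{Q}}^\varepsilon$ is controlled only in $L^1$, so Bessel's inequality cannot be applied to it either. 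With uniformly bounded $g_k$ and a $k$-independent decay rate, $\sum_k|w_k(t)|^2$ diverges. The paper circumvents this by keeping the $k$-dependent rates: it passes to the $\ell^1$ mode sum and uses the growth of $|\mathrm{Re}\,\lambda^\varepsilon_k|$ to obtain $\sum_{k\geq2}E_k(s,t)\leq C(t-s)^{-1/2}E_2(s,t)$, an integrable singularity in $s$, after which uniform-in-$k$ bounds on the forcing suffice. Your argument needs this device (or an equivalent smoothing estimate for the semigroup generated by $\mathcal{L}^\varepsilon_\xi$); once it is inserted, the remaining steps --- the $\Omega^\varepsilon$ term, the reabsorption of the quadratic term by continuity, and the identification of $T^\varepsilon\sim\ln|\Omega^\varepsilon|^{-1}_{L^\infty}\,\sup_{\xi\in J}|\lambda^\varepsilon_1(\xi)|^{-1}$ --- go through essentially as you describe.
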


\begin{proof}

Setting
\begin{equation}\label{normalform}
	\boldsymbol{v}(x,t)=\sum_{j} {v}_j(t)\,\bphi^\varepsilon_j(x,\xi(t)),
\end{equation}
we obtain an infinite-dimensional differential system for the coefficients $v_j$
\begin{equation}\label{eqwk_bis}
	\frac{dv_k}{dt}=\lambda^\varepsilon_k(\xi)\,v_k
		+\langle \bpsi^\varepsilon_k,\boldsymbol{F}\rangle + \langle \bpsi^\varepsilon_k,\boldsymbol{G}\rangle
\end{equation}
where, omitting the dependencies for shortness,
\begin{equation*}
\begin{aligned}
	\boldsymbol{F}&:=\boldsymbol{H}^\varepsilon+\sum_{j} v_j\,\Bigl\{{\mathcal M}^\varepsilon_\xi\, \bphi^\varepsilon_j
			-\partial_\xi \bphi^\varepsilon_j\,\frac{d\xi}{dt}\Bigr\}
		=\boldsymbol{H}^\varepsilon-\theta^\varepsilon \sum_{j}\Bigl(\boldsymbol{a}_j+\sum_{\ell} \boldsymbol{b}_{j\ell}\,v_\ell\Bigr)v_j, \\
	\boldsymbol{G}&:= \boldsymbol{\mathcal Q}^\varepsilon- \left( \sum_j \partial_\xi \bphi_j^\varepsilon v_j + \partial_\xi \boldsymbol{U}^\varepsilon\right) \left \{  \frac{\langle \bpsi_1^\varepsilon,\boldsymbol{ \mathcal Q}^\varepsilon \rangle}{1-\langle \partial_\xi \bpsi_1^\varepsilon, \boldsymbol{v} \rangle }+ \theta^\varepsilon \frac{\langle \partial_\xi \bpsi_1^\varepsilon, \boldsymbol{v} \rangle^2}{1-\langle \partial_\xi \bpsi_1^\varepsilon, \boldsymbol{v} \rangle }  \right\}= \boldsymbol{\mathcal Q}^\varepsilon- \boldsymbol{\mathcal N}^\varepsilon.
\end{aligned}
\end{equation*}
The coefficients $\boldsymbol{a}_j$, $\boldsymbol{b}_{jk}$ are given by
\begin{equation*}
	\boldsymbol{a}_j:=\langle \partial_{\xi} \bpsi^\varepsilon_1, \bphi^\varepsilon_j\rangle\,
			\partial_{\xi}\boldsymbol{U}^{\varepsilon}+\partial_\xi \bphi^\varepsilon_j,
	\qquad
	\boldsymbol{b}_{j\ell}:=\langle \partial_{\xi} \bpsi^\varepsilon_1, \bphi^\varepsilon_\ell\rangle
			\,\partial_\xi \bphi^\varepsilon_j.
\end{equation*}
Convergence of the series is guaranteed by assumption \eqref{derpsiphi}. Now let us set
\begin{equation*}
	E_k(s,t):=\exp\left( \int_s^t \lambda_k^\varepsilon(\xi(\tau))d\tau\right).
\end{equation*}
Note that, for $0\leq s<t$, there hold
\begin{equation*}
	E_k(s,t)=\frac{E_k(0,t)}{E_k(0,s)}
		\qquad\textrm{and}\qquad	
	0\leq E_k(s,t)\leq e^{\Lambda_k(t-s)},
\end{equation*}
where $\Lambda_k^\varepsilon := \sup\limits_{\xi\in J} \lambda_k^\varepsilon (\xi) $. From equalities \eqref{eqwk_bis} and and since there holds $v_1 \equiv 0$,  there follows
\begin{equation*}
	\begin{aligned}
	v_k(t)&=v_k(0)\,E_k(0,t)\\
		& \quad+\int_0^t \Bigl\{\langle \bpsi^\varepsilon_k,\boldsymbol{H}^\varepsilon\rangle
			-\theta^\varepsilon(\xi)\sum_{j}\Bigl(\langle \bpsi^\varepsilon_k, \boldsymbol{a}_j\rangle 
			+\sum_{\ell} \langle \bpsi^\varepsilon_k, \boldsymbol{b}_{j\ell}\rangle \,v_\ell\Bigr) v_j
			\Bigr\} E_k(s,t)\,ds \\
		& \quad+\int_0^t  \Bigl\{\langle \bpsi^\varepsilon_k,\boldsymbol{\mathcal Q}^\varepsilon\rangle-\langle \bpsi^\varepsilon_k,\boldsymbol{\mathcal N}^\varepsilon\rangle \bigr\} E_k(s,t) \, ds
	\end{aligned}
\end{equation*}
for $k\geq 2$.
Now  let us introduce the function 
\begin{equation*}
	\boldsymbol{z}(x,t):=\sum_{k\geq 2}v_k(0)\,E_k(0,t)\,\bphi^\varepsilon_k(x;\xi(t)),
\end{equation*}
which satisfies the estimate $|\boldsymbol{z}|_{{}_{L^2}}\leq |\boldsymbol{v}_0|_{{}_{L^2}} e^{\Lambda^\varepsilon_2\, t}$. From the representation formulas for the coefficients $v_k$, there holds
\begin{equation*}
|\boldsymbol{v}-\boldsymbol{z}|^2_{{}_{L^2}} \leq \sum_{k \geq 2} \left\{  \int_0^t \Bigl ( |\langle \bpsi_k^\varepsilon, \boldsymbol{F} \rangle|+ |\langle \bpsi_k^\varepsilon, \boldsymbol{G}\rangle|\Bigr)E_k(s,t) \, ds\right\}^2.
\end{equation*}
Moreover, since
\begin{equation*}
 	|\theta^\varepsilon(\xi)|\leq C\,\Omega^\varepsilon(\xi)
		\qquad\textrm{and}\qquad
	|\langle \bpsi^\varepsilon_k,\boldsymbol{H}^\varepsilon\rangle| \leq C\,\Omega^\varepsilon(\xi)
		\left\{1+ |\langle \bpsi_k^\varepsilon, \partial_\xi \boldsymbol{U}^\varepsilon\rangle|\right\}
\end{equation*}
for some constant $C>0$ depending on the $L^\infty-$norm of $\bpsi^\varepsilon_k$,
there holds
\begin{equation*}
	\begin{aligned}
	\sum_{k \geq 2} \Bigl(  \int_0^t &  |\langle \bpsi_k^\varepsilon, \boldsymbol{F} \rangle|  \, E_k(s,t) \, ds \Bigr)^2 \leq \\
	& \leq C\sum_{k\geq 2}\Bigl(\int_0^t
		 \Omega^\varepsilon(\xi)\Bigl(1+ |\langle \bpsi_k^\varepsilon, \partial_\xi \boldsymbol{U}^\varepsilon\rangle|
		+|\langle \bpsi^\varepsilon_k,\partial_{\xi}\boldsymbol{U}^{\varepsilon}\rangle|
			\sum_{j}|\langle \partial_{\xi} \bpsi^\varepsilon_1, \bphi^\varepsilon_j\rangle||v_j|\\
	&\quad
		+\sum_{j}|\langle \partial_\xi \bpsi^\varepsilon_k,  \bphi^\varepsilon_j\rangle||v_j|
		+\sum_{j} |\langle \bpsi^\varepsilon_k,\partial_\xi \bphi^\varepsilon_j\rangle|\,|v_j|
		\sum_{\ell}|\langle \partial_{\xi} \bpsi^\varepsilon_1, \bphi^\varepsilon_\ell\rangle|\,|v_\ell|\Bigr)
			E_k(s,t)\Bigr)^2\\
	&
	\leq C\sum_{k\geq 2}\Bigl(\int_0^t
		 \Omega^\varepsilon(\xi)\bigl(1+|\boldsymbol{v}|_{{}_{L^2}}^2\bigr)E_k(s,t)\,ds\Bigr)^2.
	\end{aligned}
\end{equation*}
On the other side, concerning the nonlinear terms, there holds
\begin{equation*}
\begin{aligned}
\sum_{k \geq 2} \Bigl(  \int_0^t  & |\langle \bpsi_k^\varepsilon, \boldsymbol{G} \rangle| \, E_k(s,t) \, ds \Bigr)^2 \leq  \\ & \leq C \sum_{k \geq 2} \Bigl( \int_0^t|\langle\bpsi_k^\varepsilon, \boldsymbol{\mathcal Q}^\varepsilon \rangle|  +\left|  \frac{\langle \bpsi_1^\varepsilon,\boldsymbol{\mathcal Q}^\varepsilon \rangle}{1-\langle \partial_\xi \bpsi_1^\varepsilon,\boldsymbol{v}\rangle}\right|\,\Bigl( \sum_{j} |\langle\partial_\xi \bphi^\varepsilon_j, \bpsi_k^\varepsilon \rangle| |\boldsymbol{v}_j|+ |\langle \bpsi_k^\varepsilon, \partial_\xi \boldsymbol{U}^\varepsilon\rangle| \Bigl) \\
&\quad+ \Omega^\varepsilon(\xi) \, \left|  \frac{\langle\partial_\xi \bpsi^\varepsilon_1,\boldsymbol{v} \rangle^2}{1-\langle \partial_\xi \bpsi_1^\varepsilon,\boldsymbol{v}\rangle}\right| \, \Bigl( \sum_{j} |\langle\partial_\xi \bphi^\varepsilon_j, \bpsi_k^\varepsilon \rangle| |\boldsymbol{v}_j|+ |\langle \bpsi_k^\varepsilon, \partial_\xi \boldsymbol{U}^\varepsilon\rangle|\Bigl) \, ds \, \Bigr)^2.
\end{aligned}
\end{equation*}
Moreover, since $|\boldsymbol{\mathcal Q}^\varepsilon|_{{}_{L^1}} \leq C |\boldsymbol{v}|^2_{{}_{L^2}}$, we have
\begin{equation*}
      \begin{aligned}
      |\langle\bpsi_k^\varepsilon, \boldsymbol{\mathcal Q}^\varepsilon \rangle| &\leq C |\boldsymbol{v}|^2_{{}_{L^2}}, \\
       \Bigl|\sum_{k} \langle \bpsi_k^\varepsilon, \partial_\xi \bphi_j^\varepsilon\rangle \boldsymbol{v}_j\Bigr|& \leq C |\boldsymbol{v}|_{{}_{L^2}}, \\
      |\langle \bpsi_k^\varepsilon, \partial_\xi \boldsymbol{U}^\varepsilon\rangle|\, \frac{|\langle \bpsi_1^\varepsilon,\boldsymbol{\mathcal Q}^\varepsilon \rangle|}{|1-\langle \partial_\xi \bpsi_1^\varepsilon,\boldsymbol{v}\rangle|} & \leq \frac{C|\boldsymbol{v}|^2_{{}_{L^2}}}{1-C|\boldsymbol{v}|_{{}_{L^2}} }\leq 2 C |\boldsymbol{v}|^2_{{}_{L^2}}, \\
      |\theta^\varepsilon\langle \partial_\xi \bpsi_1^\varepsilon, \boldsymbol{v} \rangle^2 \, \langle \bpsi_k^\varepsilon, \partial_\xi\boldsymbol{ U}^\varepsilon\rangle| &\leq C\, \Omega^\varepsilon(\xi) |\boldsymbol{v}|^2_{{}_{L^2}},
      \end{aligned}
\end{equation*}
so that we end up with
\begin{equation*}
      \begin{aligned}
      \sum_{k \geq 2} \Bigl(  \int_0^t  |\langle \bpsi_k^\varepsilon, \boldsymbol{G} \rangle| \, E_k(s,t) \, ds \Bigr)^2 &\leq
 C \sum_{k \geq 2} \Bigl( \int_0^t |\boldsymbol{v}|_{{}_{L^2}}^2 (1+ \Omega^\varepsilon(\xi)) \, E_k(s,t) \, ds\Bigr)^2.   
    \end{aligned}
\end{equation*}
Since $\sqrt{a+b}\leq \sqrt{a}+\sqrt{b}$, we infer
\begin{equation*}
	\begin{aligned}
	|\boldsymbol{v}-\boldsymbol{z}|_{{}_{L^2}}&\leq 
		\sum_{k\geq 2}\int_0^t\left \{\Omega^\varepsilon(\xi)
			\bigl(1+|\boldsymbol{v}|_{{}_{L^2}}^2\bigr)+ |\boldsymbol{v}|_{{}_{L^2}}^2 \right \}E_k(s,t)\,ds\\
				&\leq 
		C\int_0^t \Bigl\{\Omega^\varepsilon(\xi)\bigl(1+|\boldsymbol{v}|_{{}_{L^2}}^2\bigr) + |\boldsymbol{v}|^2_{{}_{L^2}}\Bigr\}\,\sum_{k\geq 2} E_k(s,t)\,ds.
	\end{aligned}
\end{equation*}
The assumption on the asymptotic behavior of the eigenvalues $\lambda^\varepsilon_k$
can now be used to bound the series.
Indeed, from the definition of $E_k$, there holds
\begin{equation*}
	\sum_{k\geq 2} E_k(s,t) \leq E_2(s,t) \sum_{k\geq 2} \frac{E_k(s,t)}{E_2(s,t)}
		\leq C\,(t-s)^{-1/2}\,E_2(s,t), 
\end{equation*}
and we infer
\begin{align*}
	 |\boldsymbol{v}-\boldsymbol{z}|_{{}_{L^2}}
		&\leq  C\int_0^t \Omega^\varepsilon(\xi)(t-s)^{-1/2}\,E_2(s,t) \, ds\\
		&\quad +\int_0^t \Bigl\{ |\boldsymbol{v}-\boldsymbol{z}|^2_{{}_{L^2}}+|\boldsymbol{z}|^2_{{}_{L^2}} \Bigr\}\,(t-s)^{-1/2}\,E_2(s,t)\,ds.
	\end{align*}
Now, setting $N(t):=  \sup\limits_{s\in[0,t]} E_1(s,0)\, |(\boldsymbol{v}-\boldsymbol{z})(s)|_{{}_{L^2}}$, we obtain 
\begin{equation}\label{stimalunga}
	\begin{aligned}
	E_1(t,0)\,|\boldsymbol{v}-\boldsymbol{z}|_{{}_{L^2}} &\leq C \int_0^t N^2(s) (t-s)^{-1/2}\,E_2(s,t) \,E_1(0,s)\,ds\\
	& \quad + C\int_0^t \Omega^\varepsilon(\xi)(t-s)^{-1/2}\, E_2(s,t)\, E_1(s,0)\,ds \\
		& \quad + C\int_0^t  |\boldsymbol{v}_0|^2_{{}_{L^2}}
		(t-s)^{-1/2}\,E_2(s,t)\, E_1(s,0)\,ds\\
		&\leq C_1 N^2(t) \, E_1(0,t)+ C_2 \left( |\Omega^\varepsilon|_{{}_{L^\infty}} + |\boldsymbol{v}_0|^2_{{}_{L^2}} E_1(t,0)\right),
	\end{aligned}
\end{equation}
where we used
\begin{equation}\label{disug}
\begin{aligned}
	&\int_0^t e^{\Lambda^\varepsilon_2 s}\,ds
		=\frac{1}{\Lambda_2^\varepsilon}(e^{\Lambda^\varepsilon_2 s}-1)\leq  \frac{1}{|\Lambda_2^\varepsilon|},\\
	&\int_0^t (t-s)^{-1/2}\,E_2(s,t)\,ds
		\leq \int_0^t (t-s)^{-1/2}\,e^{\Lambda_2^\varepsilon\,(t-s)}\,ds
		\leq \frac{1}{|\Lambda_2^\varepsilon|^{1/2}},
\end{aligned}
\end{equation}
and $C_1$ and $C_2$ depend on $\Lambda_2^\varepsilon$. 

Taking the supremum in  \eqref{stimalunga}, we end up with the estimate 
\begin{equation*}
	N(t) \leq A N^2(t)+B
		\qquad\textrm{with}\quad
		\left\{\begin{aligned}
		A&:=C_1  \, E_1(0,t),\\
		B&:=C_2 \left( |\Omega^\varepsilon|_{{}_{L^\infty}} + |\boldsymbol{v}_0|^2_{{}_{L^2}} E_1(t,0)\right).
		\end{aligned}\right.
\end{equation*}
Since $N(0)=0$, if $1-4AB>0$, then
\begin{equation*}
	N<\frac{1-\sqrt{1-4AB}}{2A}=\frac{2B}{1+\sqrt{1-4AB}}\leq 2B.
\end{equation*}
Hence, as soon as 
\begin{equation}\label{finalt}
4 C_1 C_2  \left( E_1(0,t)\,|\Omega^\varepsilon|_{{}_{L^\infty}} + |\boldsymbol{v}_0|^2_{{}_{L^2}} \right) <1
\end{equation}  
we obtain the following $L^2-$estimate for the difference $\boldsymbol{v}-\boldsymbol{z}$
\begin{equation}\label{estfin}
|\boldsymbol{v}-\boldsymbol{z}|_{{}_{L^2}} \leq  \left( E_1(0,t) \, |\Omega^\varepsilon|_{{}_{L^\infty}}  + |\boldsymbol{v}_0|^2_{{}_{L^2}} \right).
\end{equation}
Condition \eqref{finalt} is a condition on the final time $T^\varepsilon$. Indeed,  \eqref{finalt} can be rewritten as
\begin{equation*}
e^{\Lambda_1^\varepsilon t} \leq C \ln \left( \frac{1-|\boldsymbol{v}_0|^2_{{}_{L^2}}}{|\Omega^\varepsilon|_{{}_{L^\infty}}}\right),
\end{equation*}
that is, $T^\varepsilon$ can be chosen of order  $\ln |\Omega^\varepsilon|^{-1}_{{}_{L^\infty}} |\Lambda^\varepsilon_1|^{-1}$.

\end{proof}

\subsubsection{\bf Comments on the proof of Theorem \ref{teo1}}

\begin{remark}{\rm
In the case of the Allen-Cahn equation, $|\Lambda^\varepsilon_1|^{-1} \sim e^{1/\varepsilon}$, so that the order of the final time $T^\varepsilon$ coincides with the corresponding expression determined in \cite{CarrPego89}.

}
\end{remark}

\begin{remark}{\rm

The key idea in the above proof is to write the solution $\boldsymbol{v}$ as a series using the eigenfunctions of the linearized operator $\mathcal L^\e_\xi$ and its adjoint. 

Of course, if $\mathcal L^\e_\xi$ is self-adjoint (as, for instance, in the case of the Allen-Cahn equation, see Example \ref{ExAC}), then the eigenvalues are semi-simple by definition and  assumption {\bf H2} assures that  we can  write the solution $\boldsymbol{v}$ as in \eqref{normalform}; indeed, let us set $\mathcal H= [L^2(I)]^n$ and consider a linear operator $L$ with discrete spectrum and such that $L=L^*$. From the theory of linear self-adjoint operators \cite{Kato}, we have the spectral decomposition
\begin{equation*}
L=\sum_{k \in \N} \lambda_k P_k,
\end{equation*}
where $\{\lambda_k\}_{k \in \N} \subseteq \R$ are the eigenvalues of $L$ and the eigenprojections $P_k$ satisfy
\begin{equation*}
P_k^2=P_k, \quad P_k^*=P_k, \quad \mathds{I}={\sum_{k \in \N}}P_k\quad  \mbox{and}\quad  P_iP_j= \delta_{ij}P_i.
\end{equation*}
The last two properties mean that we have a resolution of the identity and that $P_k$ are mutually orthogonal.
In particular, if we define $ \{V_k\}_{k \in \N}$, the subspaces associated to each $P_k$ (that is, $P_k$ denotes the orthogonal projection onto $V_k$), we can define  the eigenfunctions $\{ \bpsi_{k,j} \}_{j=1}^{g_k} \subseteq V_k$, being $g_k$ the geometric multiplicity of the corresponding eigenvalue $\lambda_k$ (which is equal to its algebraic multiplicity, since $L$ is self-adjoint). 

Given $\boldsymbol{v} \in \mathcal H$ we thus have
\begin{equation*}
T \boldsymbol{v}= \sum _{k \in N} \lambda_k P_k \boldsymbol{v}, \quad \mbox{with} \quad P_k \boldsymbol{v} = \sum_{j=1}^{g_k} \langle \bpsi_{k,j},\boldsymbol{v} \rangle \bpsi_{k,j}.
\end{equation*} 
In particular, this implies that each element of the space $\mathcal H$ can be written as
\begin{equation*}
\boldsymbol{v}=\sum_{k \in \N} P_k \boldsymbol{v},
\end{equation*}
which is exactly \eqref{normalform} with $\bphi_k^\e\equiv \bpsi_k^\e$. Again, we underline that the assumption of having a discrete spectrum is here crucial in order to have a sum (and not an integral coming from the continuous spectrum).
For more details, see \cite[Chapter V, \textsection 3]{Kato}).

\vskip0.2cm

As already pointed out, the linearized operator $\mathcal L^\e_\xi$  is not necessarily self-adjoint (see Example \ref{ExAC} in the case $n\geq2$); however,  it can been  proven that it has a {\it compact resolvent}. To this end, let us set 
\begin{equation*}
\mathcal L^\varepsilon_\xi \boldsymbol{v} :=\left[ \e \d_x^2 \mathds{I}-\mathds V(x)\right] \boldsymbol{v},
\end{equation*}
where $\mathds V(x)$ is a multiplication operator. For example,  when $n=2$, we have
\begin{equation*}
\mathds V(x) \left(\begin{aligned} &w \\ &z \end{aligned}\right)=\left( \begin{aligned}  &\partial_u f_1(U^\varepsilon, V^\varepsilon)  \ \ \ \partial_v f_1 (U^\varepsilon, V^\varepsilon)  \\
 &\partial_u f_2(U^\varepsilon, V^\varepsilon)  \ \ \ \partial_v f_2 (U^\varepsilon, V^\varepsilon) \end{aligned}\right) \left(\begin{aligned} &w \\ &z \end{aligned}\right).
\end{equation*}
If we set $\mathcal L_0:=  \d_x^2 \mathds{I}$, then, given $z \in \rho(\mathcal L_0)$,  the operator $ (\mathcal L_0-z \mathds{I})^{-1}$ is compact, implying that, for $w \in \rho(\mathcal L_0) \cap \rho(\mathcal L_\xi^\e)$, $(\mathcal L_\xi^\e - w \mathds{I})^{-1}$ is compact. 

For the precise proof of this statement, we refer in particular to \cite[Theorem VII-2.4]{Kato}. Precisely, the key point is that, for sufficiently smooth coefficients, the operator $\mathds{V}$ is infinitesimally small (or {\it Kato small}) with respect to $\mathcal L_0$ (for the precise definition, we refer to  \cite[Chapter IV, \textsection 1.1]{Kato}, formula (1.1) with $b_0=0$), implying that  $T(\mu)= \mathcal L_0 + \mu \mathds{V}$ is an holomorphic family of type $(A)$ (see \cite[Chapter VII,  \textsection 2]{Kato}), and we can use Theorem VII-2.4 to prove that $T(\mu)$ has  compact resolvent for all $\mu \in \C$, since this is true for $\mu=0$.
We also refer the reader to \cite{ReedSimon2, ReedSimon4}.

\vskip0.2cm
To conclude, let us  show that if $\mathcal L^\e_\xi$ has a compact resolvent, then it has a spectral resolution. We consider the operator
 \begin{equation}\label{compact}
 T:=(\mathcal L^\e_\xi-z \mathds{I})^{-1}, \quad z \in \rho(\mathcal L^\e_\xi)
 \end{equation}
which belongs to $B_{\infty}(\mathcal H)$ by assumption. As a consequence, there exist $\{ \bphi_k\}_{k \in \N}$ and $\{ \bpsi_k\}_{k \in \N}$ such that
\begin{equation*}
T = \sum_{k \in \N} \mu_k \langle \bpsi_k,  \cdot \rangle \bphi_k,
\end{equation*}
being $\{\mu_k\}_{k \in \N}$ the eigenvalues of $T$. Because of \eqref{compact}, it holds
\begin{equation*}
(\mathcal L^\e_\xi-z \mathds{I})^{-1} = \sum_{k \in \N} \frac{1}{\lambda_k-z} \langle \bpsi_k,  \cdot \rangle \bphi_k,
\end{equation*}
where $\{\lambda_k\}_{k \in \N}$ are the eigenvalues of $\mathcal L^\e_\xi$. 
By applying to the previous identity the operator $(\mathcal L^\e_\xi-z \mathds{I})$, we get
\begin{equation*}
\mathds{I}=\sum_{k \in \N}  \langle \bpsi_k,  \cdot \rangle \bphi_k,
\end{equation*}
leading to
\begin{equation*}
\mathcal L^\e_\xi=\sum_{k \in \N} \lambda_k \langle \bpsi_k,  \cdot \rangle \bphi_k.
\end{equation*}
For further details, we refer the reader to \cite[Chapter III, \textsection 6.8]{Kato} and \cite[Chapter V, \textsection 2.3]{Kato}.
\vskip0.2cm
We now use the assumption that the spectrum of $\mathcal L^\e_\xi$ is  composed only by semi-simple eigenvalues;  it implies (see, for instance, \cite[Definition A.5]{HI}) that for each eigenvalue $\lambda_k$,  the geometric and the algebraic multiplicities are the same and we can thus write 
\begin{equation*}
P_k \boldsymbol{v} = \sum_{j=1}^{g_k} \langle \bpsi_{k,j}, \boldsymbol{v} \rangle \bphi_{k,j} \quad \Rightarrow \quad  \boldsymbol{v}= \sum_{k \in \N}\sum_{j=1}^{g_k} \langle \bpsi_{k,j}, \boldsymbol{v} \rangle \bphi_{k,j} 
\end{equation*}
which is exactly \eqref{normalform} with $v_k:= \sum_{j=1}^{g_k}\langle \bpsi_{k,j}, \boldsymbol{v} \rangle$.

}
\end{remark}

\begin{remark}{\rm
Let us underline that,  since the operator $\mathcal L^\e_\xi$ has a compact resolvent,  then its spectrum is discrete (with no accumulation point different from  $ \infty$), and assumption {\bf H2} actually  reduces to the request of having semi-simple eigenvalues.

Moreover, when $\mathcal L^\e_\xi$ is self-adjoint, then the eigenvalues are semi-simple by definition, and  {\bf H2} is then trivially satisfied.

}
\end{remark}


\begin{remark}{\rm
 As already pointed out, there are no requests on the sign of the first eigenvalue $\lambda^\varepsilon_1$, and up to now we only took advantage of his smallness with respect to $\e$. In particular, if $\lambda^\varepsilon_1$ is positive, as in the case of the Allen-Cahn equation with Neumann boundary conditions, we can't take any advantage in term of convergence from the quantity $E(0,t)$, so that, in order to use estimate \eqref{estfin} to prove  that the perturbation is small in the limit $\e \to 0$,  we need to require the additional smallness assumption on the initial datum $|\boldsymbol{v}_0|_{{}_{L^2}} \leq c \, \varepsilon$. This is however compatible  with the results of \cite{CarrPego89}.

 On the contrary, if the first eigenvalue  is small but  negative (which translates into the fact that the solution around which we are linearizing is an approximation of a  stable configuration for the system), by slightly modifying the end of the proof of Theorem \ref{teo1}, we end up with the following estimate for the perturbation
 \begin{equation*}
|\boldsymbol{v}-\boldsymbol{z}|_{{}_{L^2}} \leq  \left(  |\Omega^\varepsilon|_{{}_{L^\infty}}  + E_1(0,t) \,|\boldsymbol{v}_0|^2_{{}_{L^2}} \right).
\end{equation*}
In this case we can ensure the convergence of the term $E_1(0,t) \,|\boldsymbol{v}_0|^2_{{}_{L^2}}$ for $t \to \infty$, so that we have no longer to require any smallness assumption on the initial datum $\boldsymbol{v}_0$. 
Clearly, also the condition on the final time $T^\varepsilon$ changes, but a straightforward computation shows that it turns to be again of order $|\Lambda^\varepsilon_1|^{-1}$.

This is what happens when other types of stable equilibria than the constant states exists, as, for example, in the case of the Allen-Cahn equation with Dirichlet boundary conditions (see \cite{BerBraButPre}); in this case we expect the time dependent solution to converge, exponentially slow, to such non-costant configurations, rather then to a patternless steady state.


}
\end{remark}

\begin{remark}
{\rm Another interesting remark on the estimate \eqref{estfin} is the dependence on $\Lambda_2^\e$ of the quantities on the right hand side; such dependence is encoded in the constant $C_2$ and comes out by the use of the inequalities \eqref{disug}.

Indeed, exploiting all the computations, we actually end up with the following bound for $\boldsymbol{v}$:
 \begin{equation*}
|\boldsymbol{v}-\boldsymbol{z}|_{{}_{L^2}} \leq  \left(   |\Lambda_2^\e|^{-1/2} \, |\Omega^\varepsilon|_{{}_{L^\infty}}  +   |\Lambda_2^\e|^{-3/2} \, E_1(0,t) \,|\boldsymbol{v}_0|^2_{{}_{L^2}} \right),
\end{equation*}
so that it is interesting, in term of convergence of the quantity $|\boldsymbol{v}-\boldsymbol{z}|_{{}_{L^2}}$ in the limit $\e \to 0$, to understand how the term $ |\Lambda_2^\e|$ behaves with respect to $\e$. As already pointed out, in the case of the initial boundary value problem \eqref{cauchyAC}, $\lambda^\e_k \leq -c \, $ for all $k \geq 2$, so $\Lambda_2^\e$ is in fact a constant independent on $\e$ and it does not help in terms of convergence. On the other side, the are different examples (see, for instance, \cite{Str13}), where such a quantity behaves like $\e^{-\alpha}$, $\alpha>0$: this is meaningful  since we would have the second term on the right hand side to converge to zero as $\e \to 0$ even in the case $\lambda_1^\e>0$ and without any smallness assumption on the initial datum $\boldsymbol{v}_0$.

}
\end{remark}

\subsection{The slow convergence of the solution}

Estimate \eqref{estfin} can be used to decouple the system \eqref{NLS}; this leads to the following consequence of Theorem \ref{teo1}.

\begin{proposition}\label{cor:metaL2}
Let hypotheses {\bf H1-2-3} be satisfied and let us also assume
\begin{equation}\label{asstheta}
	(\xi-\bar \xi)\,\theta^\varepsilon(\xi)<0\quad\textrm{ for any } \xi\in J,\,\xi\neq \bar \xi
	\qquad\textrm{ and }\qquad
	{\theta^\varepsilon}'(\bar \xi)<0.
\end{equation}
Then, for $\varepsilon$ and $|\boldsymbol{v_0}|_{{}_{L^2}}$ sufficiently small, the solution $(\xi,v)$ converges exponentially fast to $(\bar \xi,0)$ as $t\to+\infty$.
\end{proposition}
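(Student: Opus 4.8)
The plan is to use Theorem~\ref{teo1} to decouple the system \eqref{NLS}: once $\boldsymbol{v}$ is known to remain small and its fast part $\boldsymbol{z}$ decays, the first equation becomes a scalar ODE that is an $O(|\boldsymbol{v}|_{{}_{L^2}})$ perturbation of the reduced flow $\dot\xi=\theta^\varepsilon(\xi)$, and assumption \eqref{asstheta} makes $\bar\xi$ an asymptotically stable equilibrium of this reduced flow. The convergence $\xi\to\bar\xi$ then feeds back to force $\boldsymbol{v}\to 0$, since the forcing in the $\boldsymbol{v}$-equation vanishes at $\xi=\bar\xi$.

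First I would record two algebraic facts about $\theta^\varepsilon(\xi)=\langle\bpsi_1^\varepsilon,{\mathcal F}^\varepsilon[\boldsymbol{U}^\varepsilon(\cdot;\xi)]\rangle$. Since $\boldsymbol{U}^\varepsilon(\cdot;\bar\xi)$ is an exact steady state, ${\mathcal F}^\varepsilon[\boldsymbol{U}^\varepsilon(\cdot;\bar\xi)]=0$, hence $\theta^\varepsilon(\bar\xi)=0$. Differentiating in $\xi$ and using $\frac{d}{d\xi}{\mathcal F}^\varepsilon[\boldsymbol{U}^\varepsilon]={\mathcal L}^\varepsilon_\xi\partial_\xi\boldsymbol{U}^\varepsilon$ together with $\mathcal{L}^{\varepsilon,\ast}_{\bar\xi}\bpsi_1^\varepsilon=\lambda_1^\varepsilon\bpsi_1^\varepsilon$ and the normalization $\langle\bpsi_1^\varepsilon,\partial_\xi\boldsymbol{U}^\varepsilon\rangle=1$, I obtain ${\theta^\varepsilon}'(\bar\xi)=\lambda_1^\varepsilon(\bar\xi)$. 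Thus the condition ${\theta^\varepsilon}'(\bar\xi)<0$ in \eqref{asstheta} is exactly the statement that the critical eigenvalue is (small and) negative, placing us in the stable regime discussed in the remarks following Theorem~\ref{teo1}, and it identifies the decay rate $\beta^\varepsilon:=-{\theta^\varepsilon}'(\bar\xi)=|\lambda_1^\varepsilon(\bar\xi)|$. In this regime Theorem~\ref{teo1} yields, for $\varepsilon$ and $|\boldsymbol{v}_0|_{{}_{L^2}}$ small, a uniform bound $|\boldsymbol{v}(t)|_{{}_{L^2}}\leq\delta$ with $\delta=C(|\Omega^\varepsilon|_{{}_{L^\infty}}+|\boldsymbol{v}_0|^2_{{}_{L^2}})$ small, because $\boldsymbol{z}$ decays like $e^{\Lambda_2^\varepsilon t}$ and $E_1(0,t)\leq 1$. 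This in turn gives $|\langle\partial_\xi\bpsi_1^\varepsilon,\boldsymbol{v}\rangle|\leq C\delta$ and $|\rho^\varepsilon[\xi,\boldsymbol{v}]|\leq C\delta^2$.

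Next I would analyze the scalar equation for $y(t):=\xi(t)-\bar\xi$. Using $\theta^\varepsilon(\bar\xi)=0$ and the Lyapunov functional $\frac12 y^2$, I compute $\frac{d}{dt}\frac12 y^2=y\,\theta^\varepsilon(\bar\xi+y)\bigl(1+\langle\partial_\xi\bpsi_1^\varepsilon,\boldsymbol{v}\rangle\bigr)+y\,\rho^\varepsilon$. By the first condition in \eqref{asstheta} the leading term $y\,\theta^\varepsilon(\bar\xi+y)$ is strictly negative for $\xi\neq\bar\xi$, and near $\bar\xi$ a Taylor expansion gives $y\,\theta^\varepsilon(\bar\xi+y)\leq -\beta^\varepsilon y^2(1+o(1))$. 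For $\delta$ small the factor $\bigl(1+\langle\partial_\xi\bpsi_1^\varepsilon,\boldsymbol{v}\rangle\bigr)$ stays close to $1$ and $y\,\rho^\varepsilon$ is negligible, so $\frac{d}{dt}y^2\leq -\beta^\varepsilon y^2$ up to a harmless constant factor; a comparison/Gronwall argument then yields $|\xi(t)-\bar\xi|\leq|\xi_0-\bar\xi|\,e^{-\beta^\varepsilon t/2}$. Because \eqref{asstheta} is assumed on all of $J$, the restoring sign of $\theta^\varepsilon$ also provides a trapping (continuation) argument guaranteeing that $\xi(t)$ never leaves $J$ and stays in the neighborhood where the smallness of $\boldsymbol{v}$ was exploited, thereby closing the bootstrap.

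The remaining point, and the main obstacle, is to upgrade ``$\boldsymbol{v}$ stays small'' to ``$\boldsymbol{v}\to 0$'': the bound from Theorem~\ref{teo1} controls $\boldsymbol{v}$ by $|\Omega^\varepsilon|_{{}_{L^\infty}}$, the supremum over $J$, which does not decay in time. The resolution is the feedback just established: as $\xi(t)\to\bar\xi$ the forcing $\boldsymbol{H}^\varepsilon(\xi(t))={\mathcal F}^\varepsilon[\boldsymbol{U}^\varepsilon]-\partial_\xi\boldsymbol{U}^\varepsilon\,\theta^\varepsilon(\xi)$ tends to zero (both summands vanish at $\bar\xi$), so the effective amplitude $\Omega^\varepsilon(\xi(t))$ driving the components $v_k$, $k\geq 2$, decays exponentially, while the first component is $\equiv 0$ by construction and all higher modes contract at rate $\leq\Lambda_2^\varepsilon<0$. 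Re-running the Duhamel estimate of Theorem~\ref{teo1} with the time-dependent forcing $\Omega^\varepsilon(\xi(t))$ in place of its supremum then gives $|\boldsymbol{v}(t)|_{{}_{L^2}}\to 0$ exponentially, establishing convergence of $(\xi,\boldsymbol{v})$ to $(\bar\xi,0)$. I expect the careful bookkeeping of this two-way coupling --- keeping $\xi$ trapped while simultaneously propagating the decay of the forcing into decay of $\boldsymbol{v}$ --- to be the delicate part of the argument.
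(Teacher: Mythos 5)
Your proposal follows essentially the same route as the paper's proof: Theorem \ref{teo1} is used to decouple the system \eqref{NLS}, the $\xi$-equation is then treated as a small perturbation of the reduced flow $\frac{d\eta}{dt}=\theta^\varepsilon(\eta)$, whose equilibrium $\bar\xi$ is exponentially stable by \eqref{asstheta}, and the decay of $\boldsymbol{v}$ is recovered from the Duhamel representation of the coefficients $v_k$ once the forcing terms decay along the trajectory. Your additions --- the identity ${\theta^\varepsilon}'(\bar\xi)=\lambda_1^\varepsilon(\bar\xi)$ (which is correct, and makes hypothesis \eqref{asstheta} transparent), the explicit Lyapunov/trapping bookkeeping for $y=\xi-\bar\xi$, and the explicit feedback argument replacing $|\Omega^\varepsilon|_{{}_{L^\infty}}$ by the decaying $\Omega^\varepsilon(\xi(t))$ --- are refinements of, not departures from, the paper's argument, which at the corresponding points simply invokes comparison with the reduced equation and posits exponential rates $\nu^\varepsilon,\mu^\varepsilon$ for $|\boldsymbol{F}|_{{}_{L^2}}$ and $|\boldsymbol{G}|_{{}_{L^2}}$.
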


\begin{remark}{\rm

Since the  sign of $\theta^\e$ somehow gives the sign of the speed of the interface convergence, assumption \eqref{asstheta} describe the observable fact that the interface moves  either towards $x=-\ell$ or $x=\ell$ depending  on whether the solution is converging towards $\pm \boldsymbol {u^*}$ (corresponding to the equilibrium locations $\bar \xi = \pm \ell$). For example, if the layer is closer to $x=\ell$ then it will converge towards $\ell$ with positive speed; on the contrary, if for small times the interface location il closer to $-\ell$, then we will have convergence  towards $-\ell$ for the variable $\xi$ (corresponding, as already pointed out, to the convergence of the solution towards $- \boldsymbol{u^*}$).
}
\end{remark}

\begin{proof}[\bf Proof of Proposition \ref{cor:metaL2}]
For any initial datum $\xi_0$, the variable $\xi(t)$ is such that
\begin{equation*}
	\frac{d\xi}{dt}=\theta^\varepsilon(\xi)\bigl(1 + r\bigr)+ \rho^\varepsilon(\xi,\boldsymbol{v}),
		\qquad\textrm{with}\quad
	|r|\leq C\bigl\{|\boldsymbol{v}_0|^2_{{}_{L^2}}\left( e^{-ct}+1\right)+|\Omega^\varepsilon|_{{}_{\infty}}\bigr\}
\end{equation*}
and 
$$ |\rho^\varepsilon(\xi,\boldsymbol{v})| \leq C |\boldsymbol{v}|^2_{{}_{L^2}} \leq |\boldsymbol{z}|^2_{{}_{L^2}}+ R \leq |\boldsymbol{v}_0|^2_{{}_{L^2}} e^{-ct} + |\boldsymbol{v}_0|^2_{{}_{L^2}}+|\Omega^\varepsilon|_{{}_{\infty}}. $$
Hence, recalling that $|\boldsymbol{v}_0|^2_{{}_{L^2}} \leq c \, \varepsilon$, in the regime of small $\varepsilon$ the solution $\xi(t)$ has similar decay properties of the solution to the following reduced equation
\begin{equation*}
\frac{d\eta}{dt}=\theta^\varepsilon(\eta), \qquad  \eta(0)=\xi(0).
\end{equation*}
As a consequence, $\xi$ converges exponentially fast to $\bar \xi$ as $t\to+\infty$. More precisely there exists $\beta^\varepsilon>0$, $\beta^\varepsilon \to 0$ as $\varepsilon \to 0$, such that 
\begin{equation}\label{finalestxi}
|\xi(t)- \bar \xi| \leq |\xi_0|e^{-\beta^\varepsilon t},
\end{equation}
for any $t$ under consideration. Concerning the perturbation $\boldsymbol{v}$, from \eqref{eqwk_bis}, we deduce
\begin{equation*}
\begin{aligned}
	\boldsymbol{v}_k(t)=\boldsymbol{v}_k(0)\exp\left(\int_{0}^t \lambda^\varepsilon_k d\tau\right)
		&+\int_{0}^{t} \langle \bpsi^\varepsilon_k,\boldsymbol{F}\rangle(s)
		\exp\left(\int_{s}^t \lambda^\varepsilon_k d\tau\right)ds \\
		&+\int_{0}^{t} \langle \bpsi^\varepsilon_k,\boldsymbol{G}\rangle(s)
		\,\exp\left(\int_{s}^t \lambda^\varepsilon_k\,d\tau\right)ds.
		\end{aligned}
\end{equation*}
By summing on $k$ and from the Jensen's inequality, we get
\begin{equation*}
	\begin{aligned}
	|\boldsymbol{v}|_{{}_{L^2}}^2(t)\leq C\left\{|\boldsymbol{v}_0|_{{}_{L^2}}^2\,e^{2\Lambda^\varepsilon_1\,t}
		+\int_{0}^{t} \left( |\boldsymbol{F}|_{{}_{L^2}}^2(s)+ |\boldsymbol{G}|_{{}_{L^2}}^2(s) \right) \,e^{2\Lambda^\varepsilon_1(t-s)}\,ds\right\}.
	\end{aligned}
\end{equation*}
Let $\nu^\varepsilon>0$ and $\mu^\varepsilon>0$ be such that $|F|_{{}_{L^2}}(t)\leq C\,e^{-\nu^\varepsilon\,t}$ and $|G|_{{}_{L^2}}(t)\leq C\,e^{-\mu^\varepsilon\,t}$; 
then, if $\nu^\varepsilon, \mu^\varepsilon \neq |\Lambda_1^\varepsilon|$, there holds
\begin{equation}\label{finalev}
	 |\boldsymbol{v}|_{{}_{L^2}}^2(t)\leq C\left\{|\boldsymbol{v}_0|_{{}_{L^2}}^2\,e^{2\Lambda^\varepsilon_1\,t}
		+t\left(e^{-2\nu^\varepsilon \,t}+ e^{-2\mu^\varepsilon \,t}\right)\right\},
\end{equation}
showing the exponential convergence to $0$ of the perturbation $\boldsymbol{v}$.
\end{proof}
\vskip0.2cm
Theorem \ref{teo1} and Proposition \ref{cor:metaL2} give a complete and rigorous description of the dynamics of the solutions to \eqref{RDsystem}, up to its very first stage, when the internal interface is formed over the initial datum; indeed, estimate \eqref{estfin} together with the precise order of the time $T^\e$ states that the layered solution $u$ remains close to an unstable configuration for a time  $T^\e$ that can be extremely long as the parameter $\e \to 0$. After this long phase of the dynamics, the perturbation $v$ becomes neglectable (as pointed out in \eqref{finalev}), and the solution $u$ is approximately given by $U^\e(x;\xi(t))$, where the parameter $\xi$ evolves accordingly to \eqref{finalestxi}: such formula shows the slow motion of $u$ throughout the slow motion of the interface. Indeed, \begin{equation*}
\xi(t) \sim \bar \xi +  |\xi_0|e^{-\beta^\varepsilon t},
\end{equation*}
meaning that the layer is converging exponentially slow towards its asymptotic configuration, and this motion is much slower as $\varepsilon$ becomes smaller. In this last phase of the dynamics the layer suddenly  disappears  (we can think as it is collapsing toward one of the walls $x=\pm \ell$) and we have convergence of the solution to one of the patternless equilibrium configuration $\pm \boldsymbol{u^*}$.

\vskip3cm

\end{document}